\newtheorem{theorem}{Theorem}[section]
\theoremstyle{definition}
\newtheorem{remark}[theorem]{Remark}
\newtheorem{Example}[theorem]{Example}
\numberwithin{equation}{section} 
\title{A Note on Hyperbolic Relaxation of the Navier-Stokes-Cahn-Hilliard system for incompressible two-phase flow}
\newcommand{\eps}{\varepsilon}
\newcommand{\ds}{\displaystyle}
\newcommand{\vecstyle}[1]{{\bf{#1}}}
\newcommand{\vecu}{\vecstyle{u}}
\newcommand{\vece}{\vecstyle{e}}
\newcommand{\vecd}{\vecstyle{d}}
\newcommand{\vecx}{\vecstyle{x}}
\newcommand{\vecn}{\vecstyle{n}} 
\newcommand{\vecf}{\vecstyle{f}}
\newcommand{\vecv}{\vecstyle{v}}
\newcommand{\setstyle}[1]{{\mathbb #1}}
\def\setR {\setstyle{R}}
\newcommand{%
  \tikzsetnextfilename{}%
  \input{.tikz}%
}[1]{%
  \tikzsetnextfilename{#1}%
  \input{#1.tikz}%
}
\begin{document}

%%-----------------------------
%%      the top matter
%%-----------------------------
%\title{...}\thanks{...}\thanks{...}% At most 5 thanks
%
\author{Jens Keim}\address{Institute of Aerodynamics and Gas Dynamics, University of Stuttgart, Wankelstra{\ss}e 3,  70563 Stuttgart,  Germany\\
Email: {\tt{jens.keim@iag.uni-stuttgart.de}}
}
\author{Hasel-Cicek Konan}\address{Institute of Applied Analysis and Numerical Simulation, University of Stuttgart, Pfaffenwaldring 57,  70569 Stuttgart,
Germany.
Email: {\tt{Hasel-Cicek.Konan@mathematik.uni-stuttgart.de}}
}
\author{Christian Rohde}\address{
Institute  of Applied Analysis and Numerical Simulation, University of Stuttgart, Pfaffenwaldring 57,  70569 Stuttgart,
Germany.
Email: {\tt{christian.rohde@mathematik.uni-stuttgart.de}}
}

%
%\dedicated{\it Dedicated to Maurice Dupont} %if necessary
%
%---------------------------------------------------------------------
%Abstract
\begin{abstract} 
We consider the two-phase dynamics of two incompressible and immiscible fluids. As a mathematical model we rely on the Navier-Stokes-Cahn-Hilliard  system that belongs to the class of diffuse-interface models. Solutions of the  Navier-Stokes-Cahn-Hilliard system exhibit strong non-local effects due to the velocity divergence constraint and the fourth-order Cahn-Hilliard operator. We suggest a new first-order approximative system for the inviscid sub-system. It relies on the artificial-compressibility ansatz for the Navier-Stokes equations, a friction-type approximation for the Cahn-Hilliard equation and a relaxation of a third-order capillarity term. We show under reasonable assumptions that the first-order operator within the approximative system is hyperbolic; precisely we prove for the spatially one-dimensional case that it is equipped with an entropy-entropy flux pair with convex (mathematical) entropy. For specific states we present a numerical characteristic analysis. Thanks to the hyperbolicity of the system, we can employ all standard numerical methods from the field of hyperbolic conservation laws. We conclude the paper with preliminary numerical results in one spatial dimension.\\
\textit{Keywords: two-phase flow, diffuse-interface modelling, Navier-Stokes-Cahn-Hilliard system, first-order hyperbolic relaxation}
\end{abstract}
%%--------------------------------------------------------------------
%\begin{resume} ... \end{resume}
%
%
\maketitle
%%-----------------------------
%%      your text
%%-----------------------------
\section{Introduction} 
%Introduction
The incompressible motion of a viscous  one-phase  fluid  is governed by the Navier--Stokes system. In absence of external forces and setting the density of the fluid to be $1$ it is given in  $d\in \{2,3\}$ space dimensions
by the equations 
\begin{equation}\label{NS}
    \begin{array}{c}
         \begin{array}{rcccl}
             {} &~& \div \vecu &=& 0,    \\[1.5ex]
             \vecu_{t} &+&  (\vecu \cdot \grad) \vecu+ \grad p  &=& \nu \Delta \vecu
         \end{array}
         \quad \text{ in }  \Omega \times (0,T).
    \end{array}
\end{equation}
Here $\Omega \subset \setR^d$ is an open set and $T>0$ some end time. By $\nu >0$ we denote the viscosity parameter. 
The unknowns are the velocity $\vecu= \vecu(\vecx,t) \in \setR^d$ and the pressure (perturbation)  $p=p(\vecx,t) \in \setR$. Classical solutions of \eqref{NS} dissipate the kinetic energy.\\
The divergence constraint $\eqref{NS}_1$ leads to  a strong non-locality for solutions of the  Navier--Stokes system such that the pressure is governed by an elliptic differential equation.  
A classical approach to avoid this non-locality is the artificial-compressibility approximation, see %\cite{chorin1997numerical}i 
\cite{chorin1967ams,chorin1967jcp,chorin1968} and \cite{temam1969I,temam1969II}. In this case, 
the Navier--Stokes system is approximated by the set of evolution equations 
\begin{equation}\label{NSapproximation}
\begin{array}{c}
\begin{array}{rcccl}
  \alpha p_{t}^{\alpha} &+& \ds  \divergence \vecu^{\alpha} &= &0, \\[2.0ex]
  \vecu_{t}^{\alpha} &+& \ds 
  %\div (\bm{\vecu}^{\alpha} \otimes \vecu^{\alpha}) 
   (\vecu^\alpha \cdot \grad) \vecu^\alpha + \frac12(\grad\cdot \vecu^\alpha)\vecu^\alpha   
  + \grad p^{\alpha}  &=&  \nu \Delta \vecu^{\alpha}
 % &\mu^{\Upsilon} &=& W'(c^{\Upsilon}) - \gamma^{2} \Delta c^{\Upsilon} 
\end{array} \quad \text{ in }  \Omega \times (0,T).
\end{array}
\end{equation} 
Adding a time derivative for the pressure and scaling it with a small 
parameter $\alpha >0$, the pressure is governed now  by an evolution equation. For $\alpha \to 0 $ solutions    $\vecu^\alpha(\vecx,t)$ and  $p^\alpha = p^\alpha(\vecx,t)$ of the system 
\eqref{NSapproximation}  recover the solutions   $\vecu$  and $p$  of the original Navier--Stokes system \eqref{NS}. The additional correction
term $\frac12(\grad\cdot \vecu^\alpha)\vecu^\alpha $ in \eqref{NSapproximation}$_2$ ensures that classical solutions dissipate the sum of kinetic energy and the potential contribution $\alpha (p^\alpha)^2/2$.   Most important,  the structure of the  artificial compressibility approximation allows to use   methods from the field of hyperbolic evolution equations  to understand the  first-order sub-system (obtained by setting $\nu$ to be zero)  in \eqref{NSapproximation}. This has in particular consequences for the numerical simulation.  One can  employ numerical methods for systems  of hyperbolic evolution laws to discretize  this  sub-system of \eqref{NSapproximation}. In such a way also convection-dominated flow regimes can be handled on the numerical level, see e.g.~\cite{gresho1991,nithiarasu2003,massa2022}.\\
In this note, we focus on the dynamics of incompressible  two-phase flows and aim at developing 
a framework that transfers the artificial-compressibility approach into this realm. We consider as a model problem a simplified version of the Navier--Stokes--Cahn--Hilliard (NSCH)  equations as  originally proposed in \cite{hohenberg1977theory}.  The NSCH model couples the Navier--Stokes equations with the fourth-order convective Cahn--Hilliard equation 
for the phase dynamics. Thus, it belongs to the class of diffuse-interface 
(or phase-field) approaches, see \cite{anderson1998diffuse}. 
The major challenge to approximate the  NSCH system using a first-order system is therefore the treatment of the coupled  convective Cahn--Hilliard 
equation. We refer to \cite{dhaouadi2024,Graselli} for approximation techniques for the pure Cahn--Hilliard evolution. 
In this work, we rely on a friction-type approximation that mimics the structure of Euler-Korteweg systems which have been considered as approximations of the
Cahn--Hilliard equations in, e.g.,~\cite{lattanzio17,MR4182904}. Roughly speaking, the fourth-order operator in the Cahn--Hilliard equation is substituted by a third-order Korteweg term. The last step to obtain a 
first-order evolution system (always ignoring the viscous part of the stress tensor which is not touched) is the use of the relaxation technique from 
\cite{rohde2010local}. We present our new approximative system with a first-order sub-system in \eqref{firstorderNSEK}.
\\
The remainder of the paper is structured as follows: In Section \ref{sec_NSCH}, we give a short outline of  the NSCH system and its thermodynamical structure.
In Section \ref{sec_NSEK}, we propose the friction-type approximation of the NSCH system and we show that it is a thermodynamically consistent system of evolution equations. 
In Section \ref{sec_formNSEK}, we introduce our new approximative system and show that it obeys a natural energy dissipation rate (see Theorem \ref{EDrelaxedNSEK}). Moreover, we show under fair conditions in one spatial dimension that 
it involves a hyperbolic first-order sub-system similar 
to the one in the artifical-compressibility approximation \eqref{NSapproximation}. For the one-dimensional sub-system, 
we present a numerical characteristic analysis
to understand the qualitative behaviour of the eigenvalues of the flux Jacobian. 
In Section \ref{sec_numer.experiments}, we present extensive numerical results for the one-dimensional version, i.e., the system \eqref{rewrittenfirstorderNSEK} below.

\section{The Navier-Stokes-Cahn-Hilliard system \label{sec_NSCH}}
%The Navier-Stokes-Cahn-Hilliard system
In this section, we review a simplified version of the Navier--Stokes--Cahn--Hilliard (NSCH) system as originally introduced in \cite{hohenberg1977theory}. In view of the  approximations which we will introduce in the sequel, we recall the energy dissipation equation for solutions of the initial boundary problem for the NSCH system. Finally, we mention a result on the existence of strong solutions in two space dimensions.\\
The domain of the flow is the bounded open set $\Omega \subset \mathbb{R}^{d}$, $d \in \{2,3\} $, 
with boundary $\partial \Omega $. The outer normal of $\partial \Omega$ will be denoted by 
$\vecn \in {\mathcal S}^{d-1}$. For some end time  $T > 0$, we 
define the space-time domain 
$\Omega_{T} = \Omega \times (0,T)$.

%%%%%%%%%%%%%%%%%%%%%%%%%%%%%%%%%%%%%%%%%%%%%%%%%%%%%%%%%%%%%%%%%%%%%%%%%%%%%%%%%%%%%%%%%%%%%%%%%
\subsection{Formulation of the NSCH system \label{subsec_NSCH}}
%%%%%%%%%%%%%%%%%%%%%%%%%%%%%%%%%%%%%%%%%%%%%%%%%%%%%%%%%%%%%%%%%%%%%%%%%%%%%%%%%%%%%%%%%%%%%%%%%%%%
Let us consider the dynamics of two incompressible and immiscible fluids at constant temperature.
There are two approaches to model such two-phase flows: the sharp-interface approach and the diffuse-interface approach. In the sharp-interface ansatz, one obtains a free-boundary value problem 
which separates the two bulk domains by a well-defined phase boundary as codimension-$1$ manifold. 
We focus here on the diffuse-interface (or phase-field) approach, where the phase boundary is smeared out with an interfacial width $\gamma > 0$, see Figure \ref{figure_diffuse}.
%%%%%%%%%%%%%%%%%%%%%%%%%%%%%%%%%%%%%%%%%%%%%%%%%%%%%%
\begin{figure}[h!]
       \begin{center}
            \includegraphics[width=0.35\textwidth]{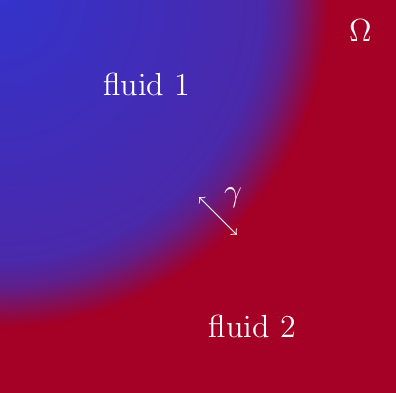}
             \caption{Sketch of a diffuse-interface configuration with phase boundary of width $\gamma$.  \label{figure_diffuse}}
        \end{center}
\end{figure}
%%%%%%%%%%%%%%%%%%%%%%%%%%%%%%%%%%%%%%%%%%%%%%%%%%%%%%%%
If the parameter $\gamma$ tends to zero, one expects that an appropriate  sharp-interface model is attained in the limit. Here, we are not interested in the sharp-interface limit and therefore 
fix the parameter $\gamma$ throughout the entire note.\\
As a meanwhile well-known diffuse-interface approach, we start from the classical Navier--Stokes--Cahn--Hilliard (NSCH) system, which can be traced back to ~\cite{hohenberg1977theory}. 
It is a combination of the incompressible Navier--Stokes equations for the hydromechanical dynamics and the Cahn--Hilliard equation for the two-phase dynamics. The governing partial differential equations are given by 
\begin{equation}\label{NSCH}
    \begin{array}{c}
         \begin{array}{rcccl}
             {} &~& \div \vecu &=& 0,    \\[1.5ex]
             \vecu_{t} &+&  (\vecu \cdot \grad) \vecu+ \grad p  &=& -c \grad \mu(c) +\nu \Delta \vecu,  \\[1.5ex]
             c_{t} &+& \div (c\vecu) -  \div (\grad \mu(c)) &=&0, \\[1.5ex]
             {} &~& \mu(c) &=& W'(c)-\gamma \Delta c
         \end{array}
         \quad \text{ in }  \Omega_{T}.
    \end{array}
\end{equation}

%\begin{alignat}{1}\label{NSCH}
%\notag &\div \bm{u} = 0\\
%&\bm{u}_{t} +  (\bm{u} \cdot \grad) \bm{u} + \grad p -\nu \Delta %\bm{u} = -c \grad \mu, \qquad  \text{in} \; \Omega_{T}, \\
%\notag &c_{t} + \div (c\bm{u}) -  \div (M \grad \mu) =0, \\
%\notag &\mu(c) = W'(c)-\gamma^2 \Delta c,
%\end{alignat}
The hydromechanical unknowns in \eqref{NSCH} are the pressure (perturbation) $p=p(\bm{x},t)\in \setR$ and the velocity field $\bm{u} = \bm{u}(\bm{x},t)\in \setR^d$.
Furthermore, we search for the phase-field variable 
$c=c(\bm{x},t)\in [-1,1]$ which acts as phase indicator (see Figure \ref{logarithmicvspolynomial}). For later use, we define the state vector  ${\bm U} = (p, \vecu^T,c)^T \in \mathcal U$  with the 
state space  $\mathcal U$ given by \[
{\mathcal U}= \left\{ (p,\vecu^T,c)^T \in \setR \times \setR^d\times \setR \, \left. \right| \,  c \in [-1,1]  \right\}.\] 
We denote the given fixed viscosity parameter by $\nu >0$, implicitly assuming that the viscosity is independent of the phase-field variable $c$. Moreover, we assumed that the densities of the two fluids are the same and equal to $1$.
The quantity $\mu(c)$  in \eqref{NSCH}  
represents the chemical potential which involves the free energy function $W:[-1,1] \to \setR$. Note that we consider here a special version of the 
Cahn--Hilliard equation where the mobility is supposed to be constant (equal to $1$).\\ 
Concerning the free energy function $W$, we choose in this paper the quartic polynomial
\begin{equation}\label{doublewell} 
{W} (c) = \dfrac{1}{4} (c^2-1)^2 \quad \forall \; c \in [-1,1],
\end{equation}
which has a double-well structure with two minima in the pure phase states, see Figure \ref{logarithmicvspolynomial}.  A
physically more grounded  alternative is the Flory-Huggins logarithmic potential
%The two minima
%are identified with pure one-phase equilibria.
%Typically one obtains then the logarithmic form 
\begin{equation}\label{doublewellphysical}
{\mathcal W}(c) = \frac{\theta}{2} \left[ (1+c) \,\ln(1+c) + (1-c)\ln(1-c)\right] - \frac{\theta_{0}}{2}c^2  \quad  \quad ( 0 < \theta < \theta_{0}).
\end{equation}
For the connection of the quartic potential in \eqref{doublewell} to the Flory-Huggins logarithmic potential, we refer the reader to \cite{giorgini2020weak} and the references therein.
The fourth-order polynomial $W$ and the logarithmic form $\mathcal W$ are depicted in Fig.~\ref{logarithmicvspolynomial}. Here, the red curve corresponds to the polynomial potential in \eqref{doublewell} and the blue to the logarithmic potential in \eqref{doublewellphysical} for $\theta = 0.4$ and $\theta_{0} = 1.0$.
\begin{figure}[h!]
\centering
\includegraphics[width=0.4\textwidth]{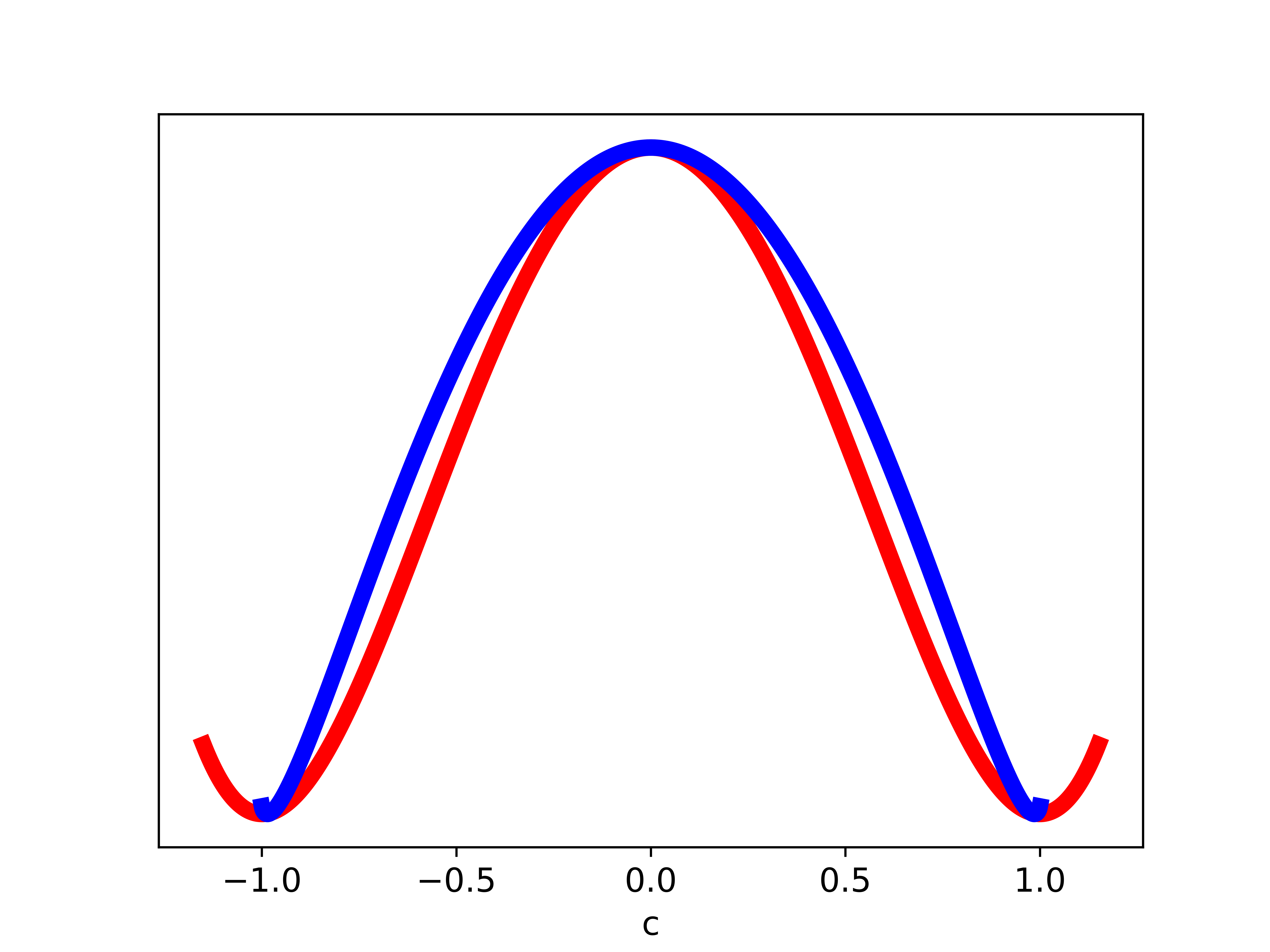}
\caption{Graphs of the logarithmic free energy $\mathcal W$ (red) and the quartic free energy $W$ (blue).}
\label{logarithmicvspolynomial}
\end{figure}

%We choose $\theta$ and $\theta_{0}$ depending on their relation in %\eqref{doublewellphysical}. 
%In Fig.~\ref{logarithmicvspolynomial} indicates that the polynomial and logarithmic potential have minima at $c=-1$ and $c=1$\footnote{We cannot work with indications. Either the minima are in $-1,1$ or not. I a  }. Moreover, the polynomial potential provides non-convex part for the existence of two phases. Therefore, we choose the polynomial potential in \eqref{doublewell} for the simplicity in calculations. \\
To close the  NSCH system \eqref{NSCH}, we assume 
homogeneous Dirichlet boundary condition for $\bm{u}$ and homogeneous Neumann boundary conditions for $c$ and $\mu(c)$, which are
\vskip-1em
\begin{alignat}{2}
\label{boundaryconditionsNSCH}\bm{u} = \bm{0} , \;  \grad c \cdot \bm{n} = 0, \; \grad \mu(c)  \cdot \bm{n} &= 0
\qquad &&\text{on} \; \partial \Omega \times (0,T).% \\
%\label{ICforNSCH} \bm{u}(.,0) = \bm{u}_{0}, \; c(.,0) &= c_{0}, \qquad  &&\text{in} \; \Omega.
\end{alignat}
Appropriate initial conditions for $\vecu$ and $c$ have to be added. 
%\begin{figure}[h!]
%\centering
%\includegraphics[width=0.4\textwidth]{doublewellpotential.png}
%\caption{\footnotesize W(c) for the NSCH system}
%\label{doublewell}
%\end{figure}
%\subsection{The free energy density}
%The Cahn-Hilliard equation can be derived from the Ginzburg-Landau free energy, which is
%\begin{equation*}
%    E[c, \grad c] = \int_{\Omega}\left(W(c) + \dfrac{\gamma^2}{2}|\grad c|^2\right) \; d\bm{x}.
%\end{equation*}

%The free energy density $W(c)$ changes while two phases interact to each other. It reaches its minimum value at equilibrium, where only one phase exists. The change in the free energy density provides the existence of two phases or the existence of only one phase. The formula of the free energy density is a logarithmic function, which is derived in \cite{lee2014physical} and the references in this paper. In the NSCH system we assume a fourth degree polynomial with two local minima, shown in Fig.\ref{doublewell}. This assumption is consistent with the logarithmic function, shown in Fig.{\ref{logarithmicvspolynomial}} and explained in~\cite{lee2014physical}.

%%----------------------------------------------------
\subsection{Energy dissipation for the NSCH system \texorpdfstring{\eqref{NSCH}}{} \label{sec_energyNSCH}}
A major step towards well-posedness of a system of evolution equations in continuum mechanics 
is the verification of an energy dissipation estimate. This can also be understood as 
thermodynamical consistency, i.e., a statement on the validity of the second law of thermodynamics.\\ 
%
%part we focus on the energy dissipation equation. It is important to show that the system is consistent with the Second Law of thermodynamics. It states that the (physical) entropy always increases. On the other hand, the (mathematical) entropy is $-\dfrac{d}{dt}E_{\mathrm{physical}}$. Therefore, we intend to obtain an entropy equation for the NSCH system, which always decreases in time. The NSCH system is an isothermal system, the energy equation of NSCH system is equal to the entropy equation.\\
For the NSCH system, we define the energy expression
\[
     E[\vecu^T,c,\vecd^T] =  \frac12 {|\vecu|^2}  + W(c) + \frac{\gamma}{2} {|\vecd|^{2}}
     \quad \left(\vecu \in \setR^d, \, c\in [-1,1], \, \vecd \in \setR^d  \right),
\]
which is just the classical van der Waals energy if the velocity vanishes and $\vecd$ is identified with the gradient of the phase field variable.
Then 
we have the following statement on thermodynamical consistency.

\begin{theorem}[Energy dissipation for \eqref{NSCH}]
\label{EDNSCH-theorem}
Let $\nu, \gamma > 0$ and let  $ {\bm U} = (p,\vecu^T,c)^T: \bar \Omega_T \to \mathcal U $ be a classical solution of the initial boundary value problem for (\ref{NSCH}) with the boundary conditions from \eqref{boundaryconditionsNSCH}.\\
Then, we have for all $t \in (0,T)$ the energy dissipation rate
\end{theorem}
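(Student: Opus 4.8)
The plan is to establish the dissipation identity
\begin{equation*}
\frac{d}{dt}\int_\Omega E[\vecu^T,c,(\grad c)^T]\,d\vecx = -\nu\int_\Omega |\grad\vecu|^2\,d\vecx - \int_\Omega |\grad\mu(c)|^2\,d\vecx \le 0
\end{equation*}
by testing the momentum balance $\eqref{NSCH}_2$ with $\vecu$ and the Cahn--Hilliard equation $\eqref{NSCH}_3$ with the chemical potential $\mu(c)$, and then adding the two resulting identities so that the capillary coupling cancels. Every boundary contribution generated by integration by parts will be eliminated by exactly one of the conditions in \eqref{boundaryconditionsNSCH}.

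First I would multiply $\eqref{NSCH}_2$ by $\vecu$ and integrate over $\Omega$. The time-derivative term gives $\frac{d}{dt}\int_\Omega\tfrac12|\vecu|^2\,d\vecx$. For the convective term I use $(\vecu\cdot\grad)\vecu\cdot\vecu=\tfrac12\,\vecu\cdot\grad|\vecu|^2$; after integration by parts it vanishes thanks to $\div\vecu=0$ from $\eqref{NSCH}_1$ and $\vecu=\bm{0}$ on $\partial\Omega$. The pressure term $\int_\Omega\grad p\cdot\vecu\,d\vecx$ vanishes for the same two reasons, and the viscous term contributes $-\nu\int_\Omega|\grad\vecu|^2\,d\vecx$ with no boundary remainder since $\vecu=\bm{0}$ on $\partial\Omega$. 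What is left on the right-hand side is the capillary coupling $-\int_\Omega c\,\grad\mu(c)\cdot\vecu\,d\vecx$, which I keep unprocessed.

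Next I would multiply $\eqref{NSCH}_3$ by $\mu(c)$ and integrate. The term $-\div(\grad\mu)$ yields $\int_\Omega|\grad\mu|^2\,d\vecx$, its boundary term removed by $\grad\mu\cdot\vecn=0$; the convective term $\div(c\vecu)$ integrates by parts (boundary term gone because $\vecu=\bm{0}$) to $-\int_\Omega c\,\vecu\cdot\grad\mu\,d\vecx$, which is precisely the negative of the coupling left over from the momentum estimate. The remaining contribution $\int_\Omega c_t\,\mu\,d\vecx$ I expand via $\eqref{NSCH}_4$: the $W'(c)$ part gives $\frac{d}{dt}\int_\Omega W(c)\,d\vecx$, and integrating $-\gamma\int_\Omega c_t\Delta c\,d\vecx$ by parts gives $\frac{d}{dt}\int_\Omega\tfrac{\gamma}{2}|\grad c|^2\,d\vecx$, the boundary term vanishing because $\grad c\cdot\vecn=0$ on $\partial\Omega$.

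Adding the two identities, the coupling terms $\mp\int_\Omega c\,\grad\mu\cdot\vecu\,d\vecx$ cancel and the three time derivatives assemble into $\frac{d}{dt}\int_\Omega E\,d\vecx$, leaving exactly the two nonpositive dissipation integrals claimed above. The one step that genuinely matters is this cancellation of the capillary coupling: it is what forces the choice of $\mu(c)$ (rather than $c$) as the test function in the phase equation, and it relies on the precise form $-c\,\grad\mu(c)$ of the Korteweg force in $\eqref{NSCH}_2$. Once that structural match is recognised, the rest is routine bookkeeping with integration by parts, each surface integral being annihilated by one of the three boundary conditions.
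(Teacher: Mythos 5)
Your proposal is correct and follows essentially the same route as the paper's proof: test the momentum equation $\eqref{NSCH}_2$ with $\vecu$, test the Cahn--Hilliard equation $\eqref{NSCH}_3$ with $\mu(c)$, and add the two identities so that the capillary coupling $\mp\int_\Omega c\,\grad\mu(c)\cdot\vecu\,d\vecx$ cancels, with every boundary term eliminated by \eqref{boundaryconditionsNSCH}. The only difference is expository detail (you spell out the convective-term identity and the expansion of $\int_\Omega c_t\,\mu\,d\vecx$ via $\eqref{NSCH}_4$, which the paper compresses into its displays \eqref{est1} and \eqref{est2}).
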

\begin{equation}\label{EnergydissipationNSCH}
\dfrac{d}{dt} \int_{\Omega} E [\vecu^T,c, (\grad c)^T](\bm{x},t) \; d\bm{x} =  - \int_{\Omega} \left( |\grad \mu(c(\bm{x},t))|^2 + \nu |\grad \bm{u}(\bm{x},t)|^{2} \right) d\bm{x} \leq 0,
\end{equation}
%where $E_\mathrm{NSCH}$ is given by
%\begin{equation}\label{totalenergyNSCH}
%E_{\mathrm{NSCH}}[\bm{Q}_{\mathrm{NSCH}}] :=  \left(\dfrac{1}{2}|\bm{u}|^2 + W(c) + \dfrac{\gamma^2}%{2} |\grad c|^{2} \right).
%\end{equation}
%$E_{\mathrm{NSCH}}$ includes the kinetic energy term from the incompressible Navier-Stokes equations, the free energy density and the interfacial energy because of the interactions between two phases, respectively.\\ 
The energy statement \eqref{EnergydissipationNSCH} is well-known. We recall here its proof to enable 
a comparison with the proofs of the Theorems \ref{EDNSEK-theorem}, \ref{EDrelaxedNSEK} below that contain corresponding statements for approximative models. 
\medskip 

\parindent0em
\textit{Proof} (of Theorem \ref{EDNSCH-theorem}).
We  derive the energy inequality  \eqref{EnergydissipationNSCH} for the NSCH system by multiplying \eqref{NSCH}$_2$ with $\bm{u}$. Integration over $\Omega$  leads to 
\begin{equation*}
\dfrac{d}{dt} \int_{\Omega} \frac12   {|\bm{u}|^2} \; d\bm{x} + \int_{\Omega} ((\bm{u} \cdot \grad) \bm{u}) \cdot \bm{u} \; d\bm{x} + \int_{\Omega} \grad p \cdot \bm{u} \; d\bm{x}   = - \int_{\Omega} c\grad \mu(c) \cdot \bm{u} \; d\bm{x} +  \nu \int_{\Omega} \Delta \bm{u} \cdot \bm{u} \; d\bm{x}. 
\end{equation*}
The use of the  theorem of Gau\ss, the divergence constraint $\div \vecu =0$ and the boundary condition 
on $\vecu$ from \eqref{boundaryconditionsNSCH} imply 
\begin{equation}\label{est1}
\begin{array}{rcl}
\ds \dfrac{d}{dt} \int_{\Omega} \frac12 {|\bm{u}|^2} \; d\bm{x}
&=& \ds - \int_{\Omega} c\grad \mu(c) \cdot \bm{u} \; d\bm{x}
   - \int_{\Omega} \nu |\grad \bm{u}(\bm{x},t)|^{2}  d\bm{x}.  
\end{array}
\end{equation}
Now, we  multiply \eqref{NSCH}$_3$ with  $\mu(c)$ and get after integration 
by the definition of $\mu$ the equation
\begin{equation}\label{est2}
\dfrac{d}{dt} \int_{\Omega}   W(c)  + \frac{\gamma}{2} |\grad c|^2     \; d\bm{x}+
\int_{\Omega} \div{(c \bm{u})} \mu(c) \; d\bm{x}    = - \int_{\Omega}  |\grad \mu(c(\bm{x},t))|^2  d\bm{x}.
\end{equation}
For the last line, we used the Neumann boundary conditions in \eqref{boundaryconditionsNSCH}.
The result \eqref{EnergydissipationNSCH} follows now from 
adding up \eqref{est1} and \eqref{est2} using once again the boundary conditions \eqref{boundaryconditionsNSCH}.
\qed
\medskip

We see that the energy $E$ decreases in time along solutions of the NSCH system. The dissipation rate is driven by the gradient in the chemical potential and the gradient in the velocity times the viscosity parameter. We consider equation \eqref{EnergydissipationNSCH} as an a-priori estimate on solutions of the NSCH system which is the ground for any well-posedness result. For the sake of completeness, we note a recent result on the existence of strong global-in-time solutions 
 proven in~\cite{giorgini2020weak}. By a strong solution we mean a triple $(p,\vecu^T,c)^T: \bar\Omega_T \to \mathcal U$  such that all equations in \eqref{NSCH} hold almost everywhere in $\Omega_T$.
%There is a-priori estimate for $W(c)$ such that $\int_{\Omega} |W'(c)|\; dx \leq C_{0}$, for $C_{0}$ depending on the lower and upper bound of $c$. 
To formulate the result, we introduce the Hilbert spaces $\bm{V}_{\theta}$ and $\bm{H}_{\theta}$, 
which are 
the closures of $C^{\infty}_{0, \theta} (\Omega)$ with respect to the $L^2(\Omega)$-norm
and with respect to the $H^1_{0}(\Omega)$-norm, respectively.
Thereby, $C^{\infty}_{0, \theta} (\Omega)$ is the space of divergence-free vector fields in $C^{\infty}_{0}(\Omega)$.  
The space  $C([0,T]; (W^{2,q}(\Omega))_{w})  $
consists of all functions $f\in L^\infty([0,T]; W^{2,q}(\Omega))$ such that 
$t\mapsto \langle \varphi,  f  \rangle  $ is continuous for all $\phi$ in the dual space of $W^{2,q}(\Omega)$. \\ We adopt the following  theorem  from~\cite{giorgini2020weak} for our initial boundary value problem.

\begin{theorem}[The existence of strong solutions of the NSCH system]
\label{theexistencetheorem}
Let $\Omega$ be a domain with $C^3$-boundary in $\mathbb{R}^2$.  Assume that initial data $\bm{u}_{0} \in \bm{V}_{\theta}(\Omega)$ and $c_{0} \in H^{2}(\Omega)$ are given such that $ \grad c_{0} \cdot \bm{n} = 0$ holds on $\partial \Omega$ and such that $\mu_{0} =  W'(c_{0})-\gamma \Delta c_{0} $ is in $H^{1}(\Omega)$ with $W$ from \eqref{doublewell}.\\ 
Then, there exists a strong solution $(p,\bm{u}^T,  c)$ of the initial boundary value problem for  (\ref{NSCH}), \eqref{boundaryconditionsNSCH}  satisfying
\begin{align*}
%&\rho \in C([0,T]; L^r(\Omega)) \cap L^{\infty}(\Omega \times (0,T)) \cap L^{\infty}(0,T;H^{-1}(\Omega)),  \\
&p \in L^{2}(0,T; H^1(\Omega)),\\
&\bm{u} \in C([0,T]; \bm{V}_{\theta}) \cap L^{2}(0,T; H^2(\Omega)) \cap H^1(0,T;\bm{H}_{\theta}),\\
&c \in C([0,T]; (W^{2,q}(\Omega))_{w}) \cap H^1(0,T;H^1(\Omega)),
\end{align*}
for any $q \in [2,\infty)$.\\ 
The boundary conditions \eqref{boundaryconditionsNSCH} on the phase-field variable hold almost everywhere in  $\partial \Omega \times (0,T)$.
\end{theorem}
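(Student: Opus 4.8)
\textit{Proof plan.} Since Theorem~\ref{theexistencetheorem} is quoted from~\cite{giorgini2020weak}, let me only indicate the route I would take to establish it. The natural strategy is a Faedo--Galerkin approximation whose a priori estimates are anchored in the energy balance of Theorem~\ref{EDNSCH-theorem}, followed by a compactness argument to pass to the limit and recover a strong solution. For the discretisation I would expand the velocity in the eigenbasis of the Stokes operator $A=-\setP\Delta$ (with $\setP$ the Leray projector), so that every approximant is automatically divergence-free and satisfies $\vecu=\bm{0}$, and expand the phase field $c$ and the chemical potential $\mu$ in the eigenbasis of the Neumann Laplacian on $L^2(\Omega)$, so that $\grad c\cdot\vecn=0$ and $\grad\mu\cdot\vecn=0$ from \eqref{boundaryconditionsNSCH} hold at the discrete level. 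Projecting \eqref{NSCH} onto the first $n$ modes yields a system of ODEs for the Fourier coefficients whose nonlinearities are polynomial, because $W$ from \eqref{doublewell} is smooth; Picard--Lindel\"of then gives a local-in-time solution, which the bounds below extend to all of $[0,T]$.

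The core of the argument is the a priori estimates. Reproducing the computation of Theorem~\ref{EDNSCH-theorem} at the Galerkin level gives, uniformly in $n$,
\[
\vecu^n \text{ in } L^\infty(0,T;L^2(\Omega))\cap L^2(0,T;H^1(\Omega)),\quad c^n \text{ in } L^\infty(0,T;H^1(\Omega)),\quad \grad\mu^n \text{ in } L^2(\Omega_T),
\]
which are the spaces $\bm{V}_\theta$, $\bm{H}_\theta$ appearing in the statement but not yet the full strong-solution regularity. To bootstrap, I would test the momentum equation with $A\vecu^n$ and, in two space dimensions, invoke the Ladyzhenskaya and Agmon inequalities to absorb the convective term $(\vecu\cdot\grad)\vecu$ and the capillary force $c\,\grad\mu$ into the viscous term, obtaining $\vecu^n$ bounded in $L^\infty(0,T;H^1)\cap L^2(0,T;H^2)$ with $\partial_t\vecu^n\in L^2(0,T;L^2)$. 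For the phase field I would read \eqref{NSCH}$_4$ as the elliptic identity $-\gamma\Delta c=\mu-W'(c)$: once $\mu$ is controlled in $H^1(\Omega)$ and $W'(c)=c^3-c$ is estimated via the two-dimensional embedding $H^1(\Omega)\hookrightarrow L^q(\Omega)$ valid for every finite $q$, elliptic $W^{2,q}$-regularity delivers $c^n$ bounded in $L^\infty(0,T;W^{2,q}(\Omega))$ for all $q\in[2,\infty)$ and $\partial_t c^n\in L^2(0,T;H^1(\Omega))$.

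The step I expect to be the main obstacle is closing precisely this higher-order bootstrap, since the coupling terms $c\,\grad\mu$ and $\div(c\vecu)$ entangle the hydromechanical and phase-field blocks at top order, and the polynomial potential does not a priori confine $c$ to $[-1,1]$, so $W'(c)$ must be tamed solely through the energy-derived $H^1$ bound and two-dimensional functional inequalities. This is exactly where $d=2$ is essential: the favourable Gagliardo--Nirenberg and Br\'ezis--Gallouet exponents let the nonlinear contributions be absorbed by the dissipative $L^2(0,T;H^2)$ norms, which would fail in three dimensions. I would organise this as a Gronwall argument on a combined higher-order energy, keeping all constants independent of $n$.

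Finally, for the passage to the limit, the uniform bounds together with the estimates on $\partial_t\vecu^n$ and $\partial_t c^n$ feed the Aubin--Lions--Simon lemma, yielding a subsequence that converges strongly enough to pass to the limit in every nonlinear term and to identify a solution of \eqref{NSCH}; the pressure $p\in L^2(0,T;H^1(\Omega))$ is then recovered from the momentum balance by the de Rham--Bogovskii argument. The weak-in-time continuity $c\in C([0,T];(W^{2,q})_{w})$ follows from the uniform $L^\infty(0,T;W^{2,q})$ bound combined with $\partial_t c\in L^2(0,T;H^1)$, the boundary conditions \eqref{boundaryconditionsNSCH} are inherited from the approximation spaces via the trace theorem, and uniqueness would be obtained by a standard energy estimate on the difference of two solutions.
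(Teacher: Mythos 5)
The first thing to note is that the paper contains no proof of Theorem~\ref{theexistencetheorem} at all: the result is explicitly \emph{adopted} from~\cite{giorgini2020weak}, and the text moves on immediately after the statement. So there is no in-paper argument to compare yours against line by line; the only fair benchmark is the strategy of the cited reference, and your plan does follow that standard route --- Faedo--Galerkin in the Stokes and Neumann-Laplacian eigenbases, first-level a priori bounds inherited from the energy identity of Theorem~\ref{EDNSCH-theorem}, a two-dimensional higher-order bootstrap via Ladyzhenskaya/Agmon-type inequalities, Aubin--Lions compactness, and recovery of the pressure by de Rham. In that sense your proposal is a reasonable reconstruction of how the quoted theorem is actually proved.

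That said, as a proof it has a genuine gap exactly where you yourself locate the difficulty, and the gap is not closed by what you write. The $W^{2,q}$ regularity of $c$ via the elliptic identity $-\gamma\Delta c=\mu-W'(c)$ needs $\mu$ bounded in $L^\infty(0,T;H^1(\Omega))$, but the estimates you list beforehand only give $\grad\mu\in L^2(\Omega_T)$; the pointwise-in-time $H^1$ control of $\mu$ is the crux of the strong-solution argument and does not come from testing the momentum equation with $A\vecu^n$. In~\cite{giorgini2020weak} it is obtained from a separate higher-order estimate on the Cahn--Hilliard block (testing the convective Cahn--Hilliard equation with $\partial_t\mu$, equivalently working with the time-differentiated equation), in which the coupling term $\int_\Omega \div(c\vecu)\,\partial_t\mu\,d\bm{x}$ must be absorbed using the same two-dimensional interpolation inequalities, and only in combination with the momentum estimate does the Gronwall argument close. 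Your sketch treats this $H^1$ bound on $\mu$ as available ("once $\mu$ is controlled in $H^1(\Omega)$"), i.e.\ it assumes the key estimate rather than deriving it; similarly, $\partial_t c\in L^2(0,T;H^1(\Omega))$ requires control of $\Delta\mu$ in $L^2(0,T;H^1(\Omega))$, a further bootstrap step not addressed. Two minor points: the theorem as stated asserts existence only, so the closing remark on uniqueness is extraneous (it is a separate result in the reference), and your observation that the quartic potential \eqref{doublewell} does not confine $c$ to $[-1,1]$ is correct and is precisely why all nonlinear terms must be handled through the energy-derived $H^1$ bound and Sobolev embeddings rather than through an $L^\infty$ bound on $c$.
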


We conjecture that analogous well-posedness results can be derived for the approximative systems which we will introduce in the next two sections.

 %-------------------------------------------------------------------------

%--------------------------------------------------------------------------------------------
\section{A friction-type approximation of the   Navier-Stokes-Cahn-Hilliard system  \label{sec_NSEK}}
% A friction-type approximation of the   Navier-Stokes-Cahn-Hilliard system
\renewcommand{\Upsilon}{\epsilon}
In this section, we introduce an approximative system for the NSCH system \eqref{NSCH}.
We localize the Navier--Stokes part of \eqref{NSCH} by employing the artificial compressibility 
approximation in the sense of \cite{chorin1967jcp}. The coupled fourth-order Cahn--Hilliard equation in  \eqref{NSCH} is substituted by a friction-type system. This approach exploits the variational structure  of Euler--Korteweg systems which has been established in, e.g.,~\cite{lattanzio17} or 
in homogenization approaches for two-phase flows, see e.g.~\cite{MR4182904}. 
We present the complete description of the friction-type approximation in Section 
\ref{sec_formNSEK}, and we provide an associated energy-dissipation result in Theorem \ref{sec_NSCH}
of Section \ref{NSEKapproximation}.
%%%%%%%%%%%%%%%%%%%%%%%%%%%%%%%%%%%%%%%%%%%%%%%%%%%%%%%%%%%%%%%%%%%%%%%%%%%%%%%%%%%%%%%%%%%%%%%%%
\subsection{Formulation of the friction-type approximation\label{sec_formNSEK}}
%%%%%%%%%%%%%%%%%%%%%%%%%%%%%%%%%%%%%%%%%%%%%%%%%%%%%%%%%%%%%%%%%%%%%%%%%%%%%%%%%%%%%%%%%%%%%%%%%%%%
For the approximate system, we introduce two small parameters, namely the artificial-viscosity parameter $\alpha>0$ and the friction parameter $\delta >0$. We collect both in the parameter vector $\Upsilon = (\alpha, \delta)^T$.
Let the state space be given by \[
\tilde {\mathcal U}= \left\{ (p,\vecu^T,c,\vecv^T)^T \in \setR \times \setR^d\times \setR \times \setR^d \, |\,  c \in [-1,1]  \right\}.\] 
We search for the pressure   $p^{\Upsilon}=p^{\Upsilon}(\bm{x},t)$,  the velocity 
$\bm{u}^{\Upsilon} = \bm{u}^{\Upsilon}(\bm{x},t)$, the phase field variable  
$c^{\Upsilon} = c^{\Upsilon}(\bm{x},t)$ and  the flux variable $\bm{v}^{\Upsilon} = \bm{v}^{\Upsilon}(\bm{x},t)$ such that ${\bm{U}}^\Upsilon =  (p^\Upsilon,(\vecu^\Upsilon)^T,c^\Upsilon,(\vecv^\Upsilon)^T)^T:  \Omega_T \to  \tilde {\mathcal U} $
satisfies  the  evolution equations 
\begin{equation}\label{NSEKapproximation}
\begin{array}{c}
\begin{array}{rcccl}
  p_{t}^{\Upsilon} &+& \ds \frac{1}{\alpha} \divergence \vecu^{\Upsilon} &= &0, \\[2.0ex]
  \vecu_{t}^{\Upsilon} &+&   \ds (\vecu^{\Upsilon} \cdot \grad) \vecu^{\Upsilon}
   +\frac12(\grad\cdot \vecu^\Upsilon)\vecu^\Upsilon 
  + \grad p^{\Upsilon}  &=&  
        - c^{\Upsilon} ( \grad ( W'(c^{\Upsilon}) - \gamma \Delta c^{\Upsilon} ))+\nu \Delta \vecu^{\Upsilon},  \\[2.0ex]
        c_{t}^{\Upsilon} &+& \divergence (c^{\Upsilon} \vecu^{\Upsilon}) + \divergence  \vecv^{\Upsilon} &=& 0,  \\[2.0ex] 
 \bm{v}^{\Upsilon}_{t} &+& \ds  \frac{1}{\delta} %M(c^\Upsilon) 
 \grad (W'(c^{\Upsilon}) - \gamma \Delta c^{\Upsilon}) &=& \ds -\frac{1}{\delta} \vecv^{\Upsilon}
 % &\mu^{\Upsilon} &=& W'(c^{\Upsilon}) - \gamma^{2} \Delta c^{\Upsilon} 
\end{array}\qquad \text{in} \; \Omega_{T}.
\end{array}
\end{equation}
The  free energy $W$ %the  mobility $M$, 
and the parameters $\nu,\gamma$ are chosen as in Section \ref{sec_NSCH}.\\
Appropriate initial data have to be prescribed. We  impose the boundary conditions 
\begin{alignat}{2}
\label{boundaryconditionsEK}  \vecu^{\Upsilon} = \bm{0}, \; \grad c^{\Upsilon} \cdot \bm{n} = 0, \,   \vecv^{\Upsilon} \cdot \vecn = \bm{0}     \text{ on } \; \partial \Omega \times (0,T).
%\label{initialconditionsEK} p^{\Upsilon}(.,0) = p_{0}^{\Upsilon}, \; \bm{u}^{\Upsilon}(.,0) = \bm{u}_{0}^{\Upsilon},\; c(.,0)^{\Upsilon} = c_{0}^{\Upsilon}, \; \bm{v}^{\Upsilon}(.,0) &= \bm{v}_{0}^{\Upsilon}, \qquad &&\text{in} \; \Omega
\end{alignat}
%where$W(c)$ denotes the free energy density. $\alpha > 0$ denotes the artificial compressibility and $\delta > 0$ denotes the friction parameter. We define a set, $\Upsilon$, for the parameters, $\alpha$ and $\beta$ as $\Upsilon = \{\alpha, \delta\}$. \\
%For the sake of the simplicity, we choose no-slip boundary conditions for $\bm{u}^{\Upsilon}$ and $\bm{v}^{\Upsilon}$ and the no-flux boundary condition for $c$. The initial data is also described as following
In comparison with the NSCH system \eqref{NSCH}, the friction-type approximation \eqref{NSEKapproximation} contains 
only differential operators up to order three. However, the state space $\tilde {\mathcal U}$ is now ($2d+2$)-dimensional. 
\begin{remark} \label{rem_sec2}
We expect that solutions $\bm{U}^\Upsilon$ of the initial boundary value problem for 
\eqref{NSEKapproximation} converge to a solution $\bm{U}$ of the initial boundary value problem for 
\eqref{NSCH} as $|\Upsilon|$ tends to 0.\\
Let us first consider the passage $\alpha \to 0$. If we ignore the term $ - c^{\Upsilon} ( \grad ( W'(c^{\Upsilon}) - \gamma \Delta c^{\Upsilon} ))$ from the right hand side of $\eqref{NSEKapproximation}_2$, we get the classical 
artificial-compressibility approximation of the incompressible Navier--Stokes system.
 If we then  resolve  \eqref{NSEKapproximation}$_4$ for $\vecv^\Upsilon$ and plug the result in 
 \eqref{NSEKapproximation}$_3$ we obtain
 \[
 c_{t}^{\Upsilon} + \divergence (c^{\Upsilon} \bm{u}^{\Upsilon}) -
 \divergence \left(
 %M(c^\Upsilon) 
 \grad ( W'(c^{\Upsilon}) - \gamma \Delta c^{\Upsilon}    )    \right) =  \delta \vecv^\Upsilon_t.
\]
If the friction parameter $\delta$ tends to zero we (formally) attain in the limit the 
convective Cahn--Hilliard equation as in  \eqref{NSCH}$_3$.
\end{remark}
\subsection{Energy dissipation for the friction-type system \texorpdfstring{\eqref{NSEKapproximation}}{}}
%The initial-boundary value problem in \eqref{NSEKapproximation} consists of four variables, so we define a new state space, $\mathcal{U}_{\mathrm{NSEK}} \subset \mathbb{R}^d, d = \{1,2,3\}$ is an open subset. Let $\bm{Q}_{NSEK} \in \mathcal{U}_{\mathrm{NSEK}}$ be a vector of state variables of the solution of \eqref{NSEKapproximation}. $\bm{Q}_{\mathrm{NSEK}}$ denotes $\bm{Q}_{\mathrm{NSEK}}=(p^{\Upsilon}, \vecu^{\Upsilon}, c^{\Upsilon}, \vecv^{\Upsilon})^{\mathrm{T}}$.

The friction-type system \eqref{NSEKapproximation} obeys  a natural 
energy dissipation law. To formulate it we introduce the 
energy term 
\[
     E^{\Upsilon}[p,\vecu^T,c,\vecv^T,\vecd^T] = \frac{\alpha}{2}{p^{2} } + \frac12 {|\vecu|^2} + \dfrac{\delta}{2}|\vecv|^{2} + W(c) + \frac{\gamma} {2} {|\vecd|^{2}}
     \quad \left((p,\vecu,c,\vecv,\vecd) \in \setR\times \setR^d\times [-1,1] \times \setR^d \times \setR^d)\right).
\]
Note that all terms in the energy   $E^{\Upsilon}$ are quadratic expresssions except the free energy function $W$. 

%%-----------------------------------------------------------------------------------------
\begin{theorem}[Energy dissipation for \eqref{NSEKapproximation}]
\label{EDNSEK-theorem}
Let $\nu, \gamma > 0$. For $\Upsilon \in (0,\infty)^2$ let $\bm{U}^{\Upsilon}: \bar \Omega_{T} \to \tilde {\mathcal{U}}$ be a classical solution of the initial boundary value problem for (\ref{NSEKapproximation}) with the boundary conditions from  (\ref{boundaryconditionsEK}).\\
Then we have for all $t \in (0,T)$ the energy dissipation rate
%\begin{equation}\label{energyequation}
%\dfrac{d}{dt} E[\bm{Q}^{\Upsilon}, \grad c^{\Upsilon}] + \div = 
%-   \vecv^\Upsilon \cdot\vecv^\Upsilon  +  \nu \Delta \vecu^\Upsilon \cdot \vecu^\Upsilon,
%\end{equation}
%which implies 
\begin{equation}\label{EDNSEK}
\dfrac{d}{dt} \int_{\Omega} E^{\Upsilon}\left[(\bm{U}^{\Upsilon})^T, (\grad c^\Upsilon)^T \right] (\vecx,t) \; d\bm{x} = - \int_{\Omega}  \left(|\vecv^\Upsilon 
(\bm{x},t)|^{2} + \nu |\grad \vecu^\Upsilon (\bm{x},t)|^{2}\right) d\bm{x} \leq 0.
\end{equation}
%where $E^{\Upsilon}$ is defined as 
%\begin{equation}
%     E^{\Upsilon}[\bm{Q}^{\Upsilon}] = \frac{\alpha}{2}{(p^\Upsilon)^{2} } + \frac12 {|\vecu^\Upsilon|^2} + \dfrac{\delta}{2}|\vecv^\Upsilon|^{2} + W(c^\Upsilon) + \frac{\gamma^{2}}{2} {|\grad c^\Upsilon|^{2}}.
%\end{equation}
\end{theorem}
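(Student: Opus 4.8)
The plan is to mimic the structure of the proof of Theorem \ref{EDNSCH-theorem} but now track four contributions to the energy, one for each equation in \eqref{NSEKapproximation}. The overall strategy is: multiply each evolution equation by the correct ``thermodynamic'' multiplier so that the left-hand sides assemble into $\frac{d}{dt}\int_\Omega E^\Upsilon\,d\vecx$, then show that all the coupling terms cancel in pairs, leaving only the two manifestly dissipative terms $-|\vecv^\Upsilon|^2$ and $-\nu|\grad\vecu^\Upsilon|^2$. Concretely, I would multiply $\eqref{NSEKapproximation}_1$ by $\alpha p^\Upsilon$ (producing the $\frac{\alpha}{2}(p^\Upsilon)^2$ term), $\eqref{NSEKapproximation}_2$ by $\vecu^\Upsilon$ (producing $\frac12|\vecu^\Upsilon|^2$), $\eqref{NSEKapproximation}_4$ by $\delta\vecv^\Upsilon$ (producing $\frac{\delta}{2}|\vecv^\Upsilon|^2$ and the friction term $-|\vecv^\Upsilon|^2$), and $\eqref{NSEKapproximation}_3$ by the chemical potential $\mu^\Upsilon := W'(c^\Upsilon)-\gamma\Delta c^\Upsilon$ (producing $\frac{d}{dt}\int_\Omega W(c^\Upsilon)+\frac{\gamma}{2}|\grad c^\Upsilon|^2\,d\vecx$ after integration by parts, exactly as in \eqref{est2}).

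The first bookkeeping check is the pressure--velocity coupling. Multiplying $\eqref{NSEKapproximation}_1$ by $\alpha p^\Upsilon$ gives $\frac{d}{dt}\int_\Omega\frac{\alpha}{2}(p^\Upsilon)^2\,d\vecx + \int_\Omega p^\Upsilon\,\divg\vecu^\Upsilon\,d\vecx = 0$, while the $\grad p^\Upsilon\cdot\vecu^\Upsilon$ term from $\eqref{NSEKapproximation}_2$ integrates by parts (using $\vecu^\Upsilon=\bm 0$ on $\partial\Omega$) to $-\int_\Omega p^\Upsilon\,\divg\vecu^\Upsilon\,d\vecx$, so these two cancel. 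Next, the convective terms: because the momentum equation now carries the correction $\frac12(\grad\cdot\vecu^\Upsilon)\vecu^\Upsilon$, the combination $\int_\Omega\bigl((\vecu^\Upsilon\cdot\grad)\vecu^\Upsilon + \frac12(\divg\vecu^\Upsilon)\vecu^\Upsilon\bigr)\cdot\vecu^\Upsilon\,d\vecx$ is the skew-symmetric (Temam) form, which vanishes after one integration by parts together with the no-slip condition, even though $\divg\vecu^\Upsilon\neq 0$ here. This is precisely the role of the artificial-compressibility correction term, and I expect this to be the most delicate cancellation to verify cleanly.

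The genuinely new interaction, and what I anticipate as the main obstacle, is the capillary coupling between the momentum equation and the phase dynamics. The right-hand side of $\eqref{NSEKapproximation}_2$ contributes $-\int_\Omega c^\Upsilon\,(\grad\mu^\Upsilon)\cdot\vecu^\Upsilon\,d\vecx$, and the advective term $\divg(c^\Upsilon\vecu^\Upsilon)$ in $\eqref{NSEKapproximation}_3$, after multiplication by $\mu^\Upsilon$, contributes $\int_\Omega\divg(c^\Upsilon\vecu^\Upsilon)\,\mu^\Upsilon\,d\vecx$. Integrating the latter by parts (using $\vecu^\Upsilon\cdot\vecn=0$) yields $-\int_\Omega c^\Upsilon\vecu^\Upsilon\cdot\grad\mu^\Upsilon\,d\vecx$, which exactly cancels the capillary force term; this is the same mechanism as in \eqref{est1}--\eqref{est2} for the NSCH system. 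The subtlety is that when I test $\eqref{NSEKapproximation}_3$ by $\mu^\Upsilon$ I must also handle $\int_\Omega\divg\vecv^\Upsilon\,\mu^\Upsilon\,d\vecx$, which integrates by parts to $-\int_\Omega\vecv^\Upsilon\cdot\grad\mu^\Upsilon\,d\vecx$; this must cancel against the flux equation's contribution. Indeed, multiplying $\eqref{NSEKapproximation}_4$ by $\delta\vecv^\Upsilon$ produces $\int_\Omega\vecv^\Upsilon\cdot\grad\mu^\Upsilon\,d\vecx$ (from the $\frac1\delta\grad\mu^\Upsilon$ term), which is the needed counterpart, leaving only the friction dissipation $-\int_\Omega|\vecv^\Upsilon|^2\,d\vecx$ from the right-hand side of $\eqref{NSEKapproximation}_4$.

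Finally I would collect all surviving terms. Summing the four tested equations, the time derivatives assemble into $\frac{d}{dt}\int_\Omega E^\Upsilon\,d\vecx$; the pressure--divergence, convective, capillary, and $\vecv^\Upsilon$--$\mu^\Upsilon$ couplings all cancel; and the only remaining right-hand side contributions are $-\nu\int_\Omega|\grad\vecu^\Upsilon|^2\,d\vecx$ (from the viscous term, via $\int_\Omega\Delta\vecu^\Upsilon\cdot\vecu^\Upsilon\,d\vecx = -\int_\Omega|\grad\vecu^\Upsilon|^2\,d\vecx$ under no-slip) and $-\int_\Omega|\vecv^\Upsilon|^2\,d\vecx$. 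This yields \eqref{EDNSEK}. Throughout, the care points are verifying that every boundary term vanishes under \eqref{boundaryconditionsEK} (in particular that $\grad\mu^\Upsilon\cdot\vecn$ is not assumed zero here, so one must route the integrations by parts to avoid needing it) and that the Neumann condition $\grad c^\Upsilon\cdot\vecn=0$ suffices to make $\frac{d}{dt}\int_\Omega\frac{\gamma}{2}|\grad c^\Upsilon|^2\,d\vecx$ come out correctly when integrating $-\gamma\Delta c^\Upsilon$ by parts against $\partial_t c^\Upsilon$.
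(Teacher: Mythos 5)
Your proposal is correct and follows essentially the same route as the paper: both multiply the four equations of \eqref{NSEKapproximation} by the variational-derivative components $\alpha p^{\Upsilon}$, $\vecu^{\Upsilon}$, $\mu^{\Upsilon}=W'(c^{\Upsilon})-\gamma\Delta c^{\Upsilon}$, $\delta\vecv^{\Upsilon}$, sum, and use the boundary conditions \eqref{boundaryconditionsEK} to eliminate all coupling terms. The only cosmetic difference is that the paper first assembles a pointwise identity in which the couplings appear as divergences and integrates once at the end, whereas you integrate each tested equation and cancel the couplings pairwise; your explicit observations (Temam skew-symmetric form, no Neumann condition needed on $\grad\mu^{\Upsilon}$) are exactly the details the paper leaves implicit.
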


The estimate \eqref{EDNSEK} renders the friction-type approximation to be thermodynamically consistent. Note that the energy dissipation rate coincides in the limit $|\Upsilon| \to 0$ with the rate
for the NSCH model in \eqref{EnergydissipationNSCH}.
%$E^{\Upsilon}$ consists of contribution of the artificial pressure, the kinetic energy, the artificial kinetic energy, the free density density and the interfacial energy. $E^{\Upsilon}$ decreases in time, so the NSEK system in \eqref{NSEKapproximation} is a thermodynamically consistent system. 
\medskip

\textit{Proof} (of Theorem \ref{EDNSEK-theorem}). Define $\mu^{\Upsilon} = W'(c^{\Upsilon}) - \gamma \Delta c^{\Upsilon} $.  We multiply the equations  in \eqref{NSEKapproximation} with
the components of the  (variational) derivative   of  the energy $E^\Upsilon$, that are $\alpha p^{\Upsilon}$, $\vecu^{\Upsilon}$, $\mu^{\Upsilon}$,  $\delta \vecv^{\Upsilon}$, respectively, and obtain  after summing up in a straightforward way
\[
%\begin{array}{rcl}
{\ds \dfrac{d}{dt} \left[(\bm{U}^{\Upsilon})^T, (\grad c^\Upsilon)^T \right]  +
\div{ \left(p^{\Upsilon} \vecu^{\Upsilon} \right) }
+\frac12\div{ \left( \vecu^{\Upsilon}  {|\vecu^{\Upsilon} |}^2\right) }
\ds {}+ \div{\left( c^\Upsilon \mu^\Upsilon \vecu^\Upsilon  \right)}
+ \div{\left(\mu^\Upsilon \vecv^\Upsilon \right) } 
} 
=   -   \vecv^\Upsilon \cdot\vecv^\Upsilon  +  \nu \Delta \vecu^\Upsilon \cdot \vecu^\Upsilon.
%\end{array}
\]
From this equation we get the energy dissipation rate \eqref{EDNSEK} 
by integration over $\Omega$ and integration by parts exploiting the boundary conditions
\eqref{boundaryconditionsEK}. 
\qed
\medskip 

As noted above the system \eqref{NSEKapproximation} is still of third order and we aim --neglecting the viscosity operator-- 
for a final first-order approximation 
of the NSCH system \eqref{NSCH}. Moreover, in view of the non-monotone 
shape of $W'$, it is not likely that the first-order operator in 
the friction-type approximation results in a hyperbolic system. 
Therefore, we we come up with yet another approximation in the next section.

\section{The relaxed friction-type approximation of the   Navier-Stokes-Cahn-Hilliard system  \label{sec_relaxNSEK}}
%\section{The first-order relaxed Navier-Stokes-Euler-Korteweg system}
%TODO Dont forget the definition for the new state space!!
 We proceed with the presentation of the final approximative system for the NSCH system 
 \eqref{NSCH}.  We employ a relaxation technique from \cite{rohde2010local} (see e.g. 
 \cite{DhaoudiDumbser22,KMR23}  for more recent work in this direction) to obtain a 
 first-order approximation that is constrained by a linear elliptic equation for a further
 unknown variable.  The relaxed  friction-type approximation \eqref{firstorderNSEK} will be given in Section 
 \ref{sec_formNSEKrelax}. We present then an energy-dissipation result  in Theorem \ref{EDrelaxedNSEK}
 of  Section   \ref{subsec_relaxenergydiss},   and conclude with a (partly numerical)  hyperbolicity analysis of the 
 first-order operator in Section \ref{sub_sec:hyperbolicity}.

%%%%%%%%%%%%%%%%%%%%%%%%%%%%
\subsection{Formulation of the relaxed friction-type approximation\label{sec_formNSEKrelax}}
%%%%%%%%%%%%%%%%%%%%%%%%%%%%%%%%%%%%%%%%%%%%%%%%%%%%%%%%%
%We obtain the Navier-Stokes-Euler-Korteweg (NSEK) system after the friction-type approximation of the Cahn-Hilliard equation. It still includes third-order derivative operators due to Korteweg terms. Our aim is to solve a first-order system for the inviscid part of the NSEK system. Therefore, these terms should be relaxed by another relaxation approach. In~\cite{rohde2010local} the elliptic relaxation has been introduced as following 
%\begin{equation}\label{ellipticrelax.}
%-\Delta \omega =  \beta (c-\omega), 
%\qquad \qquad \text{in} \; \Omega_{T}, 
%\end{equation}
%where $\omega(\bm{x},t): \Omega_{T} \to \mathbb{R}$ stands for the relaxation variable, $c(\bm{x},t): \Omega_{T} \to \mathbb{R}$ stands for the phase-field variable and $\beta > 0$ represents the relaxation parameter. \\
%When we substitute the equation in \eqref{ellipticrelax.} into the third-order derivations in \eqref{NSEKapproximation}$_2$ and \eqref{NSEKapproximation}$_4$, we obtain the first-order NSEK system. For the first-order system we expand the parameter vector $\Upsilon$ and define a new parameter vector $\varepsilon$, $\varepsilon = \{\alpha, \delta, \beta \}$. In addition, a new state space is defined as $\overline{\mathcal{U}} = \{(p, \vecu^{T},c, \vecv^{T}, \omega)^T$: $\Omega_{T} \to \overline{\mathcal{U}}$, which satisfies the following system

For the relaxed friction-type system we need  next to  the artificial-viscosity parameter $\alpha>0$ and the friction parameter $\delta >0$ a relaxation parameter $\beta >0$. All approximation 
parameters are summarized in the vector $\eps= (\alpha, \delta, \beta)^T$. As before we introduce a
state space  \[
\bar{\mathcal U}= \left\{ (p,\vecu^T,c,\vecv^T,\omega)^T \in \setR \times \setR^d\times \setR \times \setR^d \times \setR\, |\,  c \in [-1,1]  \right\}.\]
We search for the pressure   $ p^{\eps}=p^{\eps}(\bm{x},t)$,  the velocity 
$\vecu^{\eps} = \vecu^{\eps}(\bm{x},t)$, the phase-field variable  
$c^{\eps} = c^{\eps}(\bm{x},t)$, the flux variable $\vecv^{\eps} = \vecv^{\eps}(\bm{x},t)$ and 
the relaxation variable $\omega^{\eps}=\omega^{\eps}(\bm{x},t)$    such that ${\bm{U}}^\eps =  (p^\eps,(\vecu^\eps)^T,c^\eps,(\vecv^\eps)^T, \omega^\eps )^T:  \Omega_T \to  \bar {\mathcal U} $ solves

\begin{equation}\label{firstorderNSEK}
\begin{array}{c}
\begin{array}{rcccl}
p_{t}^{\varepsilon} &+& \dfrac{1}{\alpha} \divergence \vecu^{\varepsilon} &=& 0,\\ [2.0ex]
 \vecu_{t}^{\varepsilon} &+&  \ds  (\vecu^{\varepsilon} \cdot \grad) \vecu^{\varepsilon}
  +\frac12(\grad\cdot \vecu^\eps)\vecu^\eps 
 + \grad p^{\varepsilon} + c^{\varepsilon} \grad \left( W'\bigl(c^{\varepsilon}\bigr) + \frac1\beta  c^{\varepsilon}\right)   & = &  \ds  \frac1\beta c^\eps \grad  \omega^{\eps}  +  \nu \Delta \vecu^{\varepsilon},\\ [3.0ex]
c_{t}^{\varepsilon} &+& \divergence (c^{\varepsilon}\vecu^{\varepsilon}) + \divergence\vecv^{\varepsilon} &=& 0,  \\ [2.0ex]
\ds  \vecv^{\varepsilon}_{t} &+& \ds  \dfrac{1}{\delta} \grad \left( W'(c^{\varepsilon}) + \frac1\beta c^{\varepsilon} \right) &=& \ds -\frac{\vecv^{\varepsilon}}{\delta} +
 \frac1{\delta\beta} \grad\omega^{\varepsilon} 
 ,\\ [2.0ex]
 && \ds    -\gamma \Delta \omega^{\varepsilon}   +   \frac1\beta \omega^{\varepsilon} & = & \ds \frac1\beta c^{\varepsilon} 
%\notag  \mu^{\varepsilon} &= W'(c^{\varepsilon}) + \beta \gamma^{2} \bigl(c^{\varepsilon} - \omega^{\beta} \bigr),
\end{array}  \text{ in }  \Omega_{T}.
\end{array}
\end{equation}
The free energy $W$ and the parameters $\nu,\gamma$ are chosen as before. We complete the initial
boundary value problem  for 
the system \eqref{firstorderNSEK} by appropriate initial data and the boundary conditions 

\begin{alignat}{2}
\label{boundaryconditionsNSEKfirst}
\vecu^{\varepsilon} = \bm{0},   \; \vecv^{\varepsilon} \cdot \vecn = 0,\grad \omega^{\eps} \cdot \bm{n} &= 0&&\text{ on }  \partial \Omega \times (0,T).
%\label{initialconditionsNSEKfirst} p^{\varepsilon}(.,0) = p^{\varepsilon}_{0}, \; \vecu^{\varepsilon}(.,0) = \vecu^{\varepsilon}_{0},\; c(.,0)^{\varepsilon} = c^{\varepsilon}_{0}, \; \vecv^{\varepsilon}(.,0) = \vecv^{\varepsilon}_{0}, \; \omega^{\beta}(.,0) &= \omega^{\beta}_{0}, \qquad &&\text{in} \; \Omega.
\end{alignat}
Note that the Neumann conditions for the phase field variable  in \eqref{boundaryconditionsEK}   are transferred to the relaxation variable $\omega^\eps$. 
\begin{remark}
We expect that solutions $\bm{U}^\eps$ of the initial boundary value problem for 
\eqref{firstorderNSEK} converge to a solution $\bm{U}$ of the initial boundary value problem for 
\eqref{NSCH} as $|\epsilon|$ tends to 0. In Remark \ref{rem_sec2} we discussed already the partial limits $\alpha,\delta\to 0$. 
Let us therefore fix $\alpha, \delta$  and consider the relaxation limit $\beta \to 0$. We set 
$\vecu\equiv 0$   and obtain --ignoring the index $\Upsilon$ and setting $\delta =1$-- from  the decoupled equations \eqref{NSEKapproximation}$_{2}$ and \eqref{NSEKapproximation}$_{3}$
the one-dimensional model problem 
\begin{equation}\label{limit}
\begin{array}{rcl}
c_t +  v_x  &=& 0,  \\ [2.0ex]
v_{t} +  {( W'(c))}_x  &=& \ds  -  v  + {\gamma}c_{xxx}.
\end{array}
\end{equation}
The corresponding relaxed approximation writes as 
\begin{equation}\label{approx}
\begin{array}{rcl}
c^\beta_t +  v^\beta_x  &=& 0,  \\ [2.0ex]
v^\beta_{t} +  {( W'(c^\beta))}_x  &=& \ds  -  v^\beta  + \frac{1}{\beta}
{( \omega^\beta -c^\beta)}_{x},\\[2ex]
\ds  -\gamma \Delta \omega^{\varepsilon}   +   \frac {1}{\beta} \omega^{\varepsilon} & = & \ds \frac {1}{\beta} c^{\varepsilon}. 
\end{array}
\end{equation}
For $\beta \to 0$ we expect that $|\omega^\beta - c^\beta|$ tends to zero, and that 
$ \beta^{-1}|\omega^\beta - c^\beta|$ approaches $\Delta c^\beta$. These formal 
limits have been verified in e.g. \cite{ViorelRohde, Giesselmann}, where the relaxation limit for a viscous version of 
\eqref{limit} has been analyzed.  
\end{remark}

%%%%%%%%%%%%%%%%%%%%%%%%%%%%%%%%%%%%%%%%%%%%%%%%%%%%%%%%%%%%%%%%%%%%%%%%%%%%%%%%
\subsection{Energy dissipation for the  relaxed friction-type  system \texorpdfstring{\eqref{firstorderNSEK}}{}} 
\label{subsec_relaxenergydiss}
%%%%%%%%%%%%%%%%%%%%%%%%%%%%%%%%%%%%%%%%%%%%%%%%%%%%%%%%%%%%%%%%%%%%%%%%%%%%%%%%%
As for the  models  discussed in Sections \ref{sec_NSCH}, \ref{sec_NSEK},  we can prove an energy-dissipation rate for the relaxed friction-type system \eqref{firstorderNSEK} that ensures its thermodynamical consistency. We start from the 
energy term 
\[
\begin{array}{c}
   \ds   E^{\epsilon}[p,\vecu^T,c,\vecv^T, \omega, \vece^T] =
   \ds    \frac{\alpha}{2}{p^{2} } + \frac12 {|\vecu|^2} + \dfrac{\delta}{2}|\vecv|^{2} + W(c) + 
    \frac1{2\beta} (c-\omega)^2+
     \frac{\gamma^{2}}{2} {|\vece|^{2}} \\[3ex]
 \hspace*{6cm}   \left((p,\vecu,c,\vecv,\omega,\vece) \in \setR\times \setR^d\times [-1,1] \times \setR^d \times \setR \times \setR^d)\right)
\end{array}     
\]
and provide  
%
%
%
%
%
%The initial-boundary problem in \eqref{firstorderNSEK} is a first-order system for $\nu = 0$ with five equations. Therefore, we need a new state space. Let $\mathcal{U}^{\varepsilon}$ be an open subset in $\mathbb{R}^{d}, d = \{1,2,3\}$. Let $\bm{Q}^{\varepsilon} \in \mathcal{U}^{\varepsilon}$ be a vector of state variables of the solution of \eqref{firstorderNSEK}, denoting $\bm{Q}^{\varepsilon} = (p^{\varepsilon}, \vecu^{\varepsilon}, c^{\varepsilon}, \vecu^{\varepsilon})^{\mathrm{T}}$.
\begin{theorem}[Energy dissipation for \eqref{firstorderNSEK}]
\label{EDrelaxedNSEK}
Let $\nu, \gamma > 0$. For $\eps \in (0,\infty)^3$ let $\bm{U}^{\eps}: \bar \Omega_{T} \to \bar {\mathcal{U}}$ be a classical solution of the initial boundary value problem for (\ref{firstorderNSEK}) with the boundary conditions from  (\ref{boundaryconditionsNSEKfirst}).\\
Then we have for all $t \in (0,T)$ the energy dissipation rate
\begin{equation}\label{Efirstorder}
\dfrac{d}{dt} \int_{\Omega} E^{\varepsilon}\left[(\bm{U}^{\varepsilon})^T, (\grad\omega^\eps)^T\right] (\bm{x},t)\; d\bm{x} 
= - \int_{\Omega}  \left(|\vecv^\Upsilon (\bm{x},t)|^{2} + \nu |\grad \vecu^\Upsilon (\bm{x},t)|^{2}\right) d\bm{x} \leq 0.
\end{equation}
%where the total energy is defined by
%\begin{equation}\label{totalenergyrelaxredNSEK}
%E^{\varepsilon} [\bm{Q}^{\varepsilon}, \omega^{\beta} ] = \alpha \dfrac{(p^{\varepsilon} )^{2} }{2} + \dfrac{|\vecu^{\varepsilon} |^2}{2} + \dfrac{\delta}{2} |\vecv^{\varepsilon} |^{2} + W(c^{\varepsilon} ) + \gamma^2  \dfrac{|\grad \omega^{\beta}|^2}{2} + \dfrac{\gamma^2}{\beta}  \dfrac{(c^{\varepsilon} -\omega^{\beta})^{2}}{2}.
%\end{equation}
\end{theorem}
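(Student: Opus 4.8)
The plan is to follow the same route as in the proof of Theorem~\ref{EDNSEK-theorem}: multiply each equation of \eqref{firstorderNSEK} by the component of the variational derivative of $E^\eps$ associated with the corresponding unknown, add the resulting identities, integrate over $\Omega$, and discharge the boundary contributions by means of \eqref{boundaryconditionsNSEKfirst}. The one genuinely new ingredient is the elliptic constraint \eqref{firstorderNSEK}$_5$, which will be needed to convert the relaxation terms into the gradient energy. To set this up I would first introduce the relaxed chemical potential
\[
\mu^\eps := W'(c^\eps) + \frac1\beta\bigl(c^\eps-\omega^\eps\bigr),
\]
and note that rewriting \eqref{firstorderNSEK}$_5$ as $\tfrac1\beta(c^\eps-\omega^\eps) = -\gamma\Delta\omega^\eps$ gives $\mu^\eps = W'(c^\eps)-\gamma\Delta\omega^\eps$, i.e.\ the natural relaxation of the chemical potential in \eqref{NSCH}. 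With this abbreviation the momentum equation \eqref{firstorderNSEK}$_2$ becomes $\vecu^\eps_t+(\vecu^\eps\cdot\grad)\vecu^\eps+\tfrac12(\grad\cdot\vecu^\eps)\vecu^\eps+\grad p^\eps+c^\eps\grad\mu^\eps=\nu\Delta\vecu^\eps$ and the flux equation \eqref{firstorderNSEK}$_4$ becomes $\vecv^\eps_t+\tfrac1\delta\grad\mu^\eps=-\tfrac1\delta\vecv^\eps$, since the terms $\tfrac1\beta c^\eps\grad\omega^\eps$ and $\tfrac1{\delta\beta}\grad\omega^\eps$ on their right-hand sides are exactly the ones absorbed into $\mu^\eps$.

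Next I would multiply \eqref{firstorderNSEK}$_1$ by $\alpha p^\eps$, the reformulated \eqref{firstorderNSEK}$_2$ by $\vecu^\eps$, \eqref{firstorderNSEK}$_3$ by $\mu^\eps$, and the reformulated \eqref{firstorderNSEK}$_4$ by $\delta\vecv^\eps$, and sum. Exactly as in Theorem~\ref{EDNSEK-theorem}, three cancellations occur: the coupling $p^\eps\,\divergence\vecu^\eps$ produced by the first two equations cancels (this is the purpose of the $1/\alpha$-scaling), the convective contributions collapse into the pure divergence $\tfrac12\divergence(\vecu^\eps|\vecu^\eps|^2)$ --- for which the correction term $\tfrac12(\grad\cdot\vecu^\eps)\vecu^\eps$ is essential --- and the chemical-potential couplings combine into $\divergence(c^\eps\mu^\eps\vecu^\eps)+\divergence(\mu^\eps\vecv^\eps)$. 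The right-hand side reduces to $\nu\,\Delta\vecu^\eps\cdot\vecu^\eps-|\vecv^\eps|^2$, while the term $\mu^\eps c^\eps_t$ produced by \eqref{firstorderNSEK}$_3$ is split, using the definition of $\mu^\eps$, as
\[
\mu^\eps c^\eps_t=\frac{d}{dt}\Bigl(W(c^\eps)+\frac1{2\beta}(c^\eps-\omega^\eps)^2\Bigr)+\frac1\beta(c^\eps-\omega^\eps)\,\omega^\eps_t .
\]

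Finally I would integrate over $\Omega$. All divergence terms vanish by the divergence theorem together with $\vecu^\eps=\bm 0$ and $\vecv^\eps\cdot\vecn=0$ on $\partial\Omega$, and $\int_\Omega\nu\,\Delta\vecu^\eps\cdot\vecu^\eps\,d\vecx=-\int_\Omega\nu|\grad\vecu^\eps|^2\,d\vecx$ again because $\vecu^\eps=\bm 0$ on $\partial\Omega$. The step I expect to be the crux is the leftover relaxation term $\int_\Omega\tfrac1\beta(c^\eps-\omega^\eps)\,\omega^\eps_t\,d\vecx$, which has to account for the time derivative of the gradient energy. Substituting the constraint $\tfrac1\beta(c^\eps-\omega^\eps)=-\gamma\Delta\omega^\eps$ and integrating by parts with the Neumann condition $\grad\omega^\eps\cdot\vecn=0$ gives
\[
\int_\Omega\frac1\beta(c^\eps-\omega^\eps)\,\omega^\eps_t\,d\vecx=-\int_\Omega\gamma\,\Delta\omega^\eps\,\omega^\eps_t\,d\vecx=\int_\Omega\gamma\,\grad\omega^\eps\cdot\grad\omega^\eps_t\,d\vecx=\frac{d}{dt}\int_\Omega\frac\gamma2|\grad\omega^\eps|^2\,d\vecx ,
\]
which supplies exactly the gradient-energy term of $E^\eps$; note that this computation determines its coefficient to be $\gamma/2$. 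Collecting all identities then yields \eqref{Efirstorder}. The only essentially new difficulty relative to the earlier energy estimates is this coupling of the hyperbolic relaxation terms to the elliptic equation \eqref{firstorderNSEK}$_5$ and its boundary condition; the remaining manipulations are the same bookkeeping already performed for \eqref{NSEKapproximation}.
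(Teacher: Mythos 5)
Your proposal is correct and follows essentially the same route as the paper: you use the same multipliers $\alpha p^\eps$, $\vecu^\eps$, $W'(c^\eps)+\tfrac1\beta(c^\eps-\omega^\eps)$, $\delta\vecv^\eps$, and your step of substituting the elliptic constraint $\tfrac1\beta(c^\eps-\omega^\eps)=-\gamma\Delta\omega^\eps$ into the leftover term and integrating by parts with $\grad\omega^\eps\cdot\vecn=0$ is algebraically identical to the paper's step of multiplying \eqref{firstorderNSEK}$_5$ by $\omega^\eps_t$ and adding it to the summed evolution identities. Your observation that the computation fixes the gradient-energy coefficient to $\gamma/2$ agrees with the paper's own proof, which indicates that the factor $\gamma^{2}/2$ in the displayed definition of $E^{\eps}$ is a typo.
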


\begin{proof}
%The approach of derivation of the energy dissipation equation for the first-order relaxed NSEK is the same as the two previous energy dissipation equations. There is only one additional equation, the elliptic relaxation equation, \eqref{firstorderNSEK}$_{5}$. 
As in the proofs of Theorems \ref{EDNSCH-theorem}, \ref{EDNSEK-theorem}, we multiply the 
evolution equations in \eqref{firstorderNSEK} by the  following components of the variational
derivative of $E^\eps$:
\[
\alpha p^\eps, (\vecu^\eps)^T, W'(c^\eps) + \frac1{\beta}(c^\eps-\omega^\eps), \delta (\vecv^\eps)^T. 
\]
After summing up these results and using the product rule  we get 
\begin{equation}
\begin{array}{rcl}
\ds && \ds \hspace*{-3cm}\dfrac{d}{dt} \left(  
 \frac{\alpha}{2}{(p^\eps)^{2} } + \frac12 {|\vecu^\eps|^2} + W(c^\eps)+ \dfrac{\delta}{2}|\vecv^\eps|^{2}  \right)
  + \frac{1}{\beta} c^\eps_t  (c^\eps-\omega^\eps)
\\[3ex]
&+&{} \ds \div{ \left(p^{\eps} \vecu^{\eps} \right) } 
+ \frac12\div{ \left(  {|\vecu^{\eps} |}^2  \vecu^{\eps} \right) }\\[2ex]
\ds &+& \ds   \div{ \left( c^{\varepsilon} \left( W'(c^{\varepsilon}) + \frac1{\beta} (c^{\varepsilon} 
 - \omega^\eps)\right)\vecu^\eps \right)}\\[3ex]
 &+& \ds  \divergence  \left(\left( W'\bigl(c^{\varepsilon}\bigr) + \frac1{\beta} (c^{\varepsilon} 
 - \omega^\eps)\right)  \vecv^{\varepsilon}\right)\\[3ex]
\ds 
\ds && ={}   -   \vecv^\eps \cdot\vecv^\eps +  \nu \Delta \vecu^\eps \cdot \vecu^\eps.
\end{array} \label{varia}
\end{equation}
In turn we multiply the elliptic constraint $\eqref{firstorderNSEK}_5$ by $\omega^{\eps}_{t}$ to deduce 
\[
  \frac\gamma 2\frac{d}{dt} |\grad\omega^\eps|^2    - \div{( \omega^\eps_t  \grad \omega^\eps )}+
    \frac1{\beta}  \omega^\eps_t   \left(\omega^{\varepsilon}  -c^{\varepsilon}  \right)   = 0.  
\]
The addition of the last line and \eqref{varia} directly provides  after integration and using the boundary conditions \eqref{boundaryconditionsNSEKfirst} the result \eqref{Efirstorder}. 

\end{proof}
%%%%%%%%%%%%%%%%%%%%%%%%%%%%%%%%%%%%%%%%%%%%%%%%%%%%%%%%%%%%%%%%%%%%%%%%%%%%%%%%%%%%%%%%%%%%%%%%%%
\subsection{A hyperbolic sub-system of the  relaxed friction-type approximation \texorpdfstring{\eqref{firstorderNSEK}}{}} 
\label{sub_sec:hyperbolicity}

\newcommand{\vecQ}{{\bm Q}}
\newcommand{\vecS}{{\bm S}}
\newcommand{\vecH}{{\bm H}}
In the preceding Section \ref{subsec_relaxenergydiss}, we proved  that the relaxed friction-type system  \eqref{firstorderNSEK}  is thermodynamically consistent.  
In view of its numerical discretization another property is even more important. Let us restrict ourselves to the case $d=1$ and introduce the reduced  state space \[
{\mathcal Q} = \left\{ (p,u,c,v)^T \in \setR^4\, |\,  c \in [-1,1]  \right\}.\]
Skipping the index vector $\eps$ and using $x= x_1$ the equations $\eqref{firstorderNSEK}_1-\eqref{firstorderNSEK}_4$  can be rewritten for 
$\vecQ: \Omega_T \to \mathcal Q$ in the  form   
\begin{equation}\label{rewrittenfirstorderNSEK}
{\bm{Q}}_t + {\vecf(\vecQ)}_{x}  = \vecS(\vecQ, \omega),
\end{equation}
with
\[
\vecf(\vecQ) = \left(\begin{array}{c}
\dfrac1\alpha u\\[1.5ex]
\ds  \frac34 u^2 + p + G(c)\\[1.5ex]
cu+ v\\[1.5ex]
\ds \frac{1}{\delta} \left(W'(c) + \dfrac1\beta c \right)\\
\end{array}
\right)  ,\quad 
\vecS(\vecQ, \omega) = 
\left(\begin{array}{c}
0\\[1.5ex]
\ds \frac1\beta c  \omega_{x} + \nu u_{xx}\\[1.5ex]
0\\[1.5ex]
\ds \frac{1}{\delta} v+ \frac{1}{\delta\beta} \omega_{x}
\end{array}
\right).
\]
The function $G$ is supposed to satisfy 
\[
G'(c) = c W''(c) + \frac{1}{\beta}c.
\]
%where $\dfrac{\partial G(c)}{\partial x_{j}} = c (W'(c))_{x_{j}} + \frac1{2\beta}\left(\frac{c^2}{2}\right)_{x_{j}}, \; d=1,2$.
We will prove that the closed  homogeneous system of conservation laws given by 
\begin{equation} \label{claw}
{\bm{Q}}_t + {\vecf(\vecQ)}_{x}= 0
\end{equation}
is hyperbolic in $\mathcal Q$. That means that the flux Jacobian $D\vecf $, 
which is given by 
\begin{equation}\label{flux_con_prime}
    D\bm{f}(\vecQ) = \begin{pmatrix} 
        0 &  \ds \frac1\alpha  & 0 & 0 \\[1.5ex]
        1 & \ds \frac32 u & G'(c)& 0 \\
     %   0 & u_2 & u_1 & 0 & 0 & 0 \\
        0 & c  & u & 1 \\
        0 & 0  & \ds \frac{1}{\delta} \left( W''(c) + \dfrac1\beta \right) & 0\\ 
      %  0 & 0 & 0 & 0 & 0 & 0
    \end{pmatrix},
\end{equation}
satisfies 
for all $\vecQ\in \mathcal Q$ the spectral conditions
%for all $\vecn = (n_1,n_2)^T \in {\mathcal S}^{1} $ that the matrices
\newcommand{\vecr}{{\vecstyle{r}}}
\[
\lambda_1(\vecQ), \ldots, \lambda_4(\vecQ) \in  \setR, \qquad {\rm{span}}
\{\vecr_1(\vecQ),\ldots,\vecr_4(\vecQ) \} = \setR^4. 
\]
Here we denote for $i \in \{1,\ldots,4\}$ by $\lambda_i=\lambda_i(\vecQ)$ and $\vecr_i = \vecr_i(\vecQ)$ the eigenvalues and corresponding eigenvectors of $D\vecf(\vecQ)$, respectively.\\
Hyperbolicity of  \eqref{claw} would ensure that one can use 
standard numerical methods for  hyperbolic conservation laws to discretize the  corresponding sub-system in \eqref{rewrittenfirstorderNSEK} in a robust manner. Note that the evaluation of the non-conservative products involving $c\omega_x $  in the right-hand-side term $\vecS$ in \eqref{rewrittenfirstorderNSEK} pose no problems because $\omega$ is a more regular function as a solution of the linear elliptic constraint $\eqref{firstorderNSEK}_5$. The following theorem ensures 
the hyperbolicity for $\beta$ chosen sufficiently small. To formulate it we need the notion 
of an entropy/entropy-flux pair. The pair $(\eta,q): {\mathcal Q} \to \setR^2$ is called entropy/entropy-flux pair for \eqref{claw}
iff  $\eta$ is convex and  the compatibility relation 
\begin{equation}\label{compa}
   (\grad \eta(\vecQ))^T D\vecf(\vecQ)    = (\grad q(\vecQ))^T 
\end{equation}
holds for all $\vecQ \in \mathcal Q$.

\begin{theorem}[Hyperbolicity of \eqref{claw}]
    \label{theorem_hyperbolicity}
    Let $\alpha,\delta >0$ and let  $\beta$ satisfy
    \begin{equation}\label{betasmall}
     \beta < -    \left(\min_{c\in [-1,1]}  \left\{  \phantom{a^2_1} \!\!  \!\!\!\!\! \min\{W''(c),0\} \right\}\right)^{-1},
    \end{equation}
    Then, the  pair $(\eta,q): {\mathcal Q} \to \setR^2$ 
    with 
    \begin{equation}
    \begin{array}{rcl}\label{etaAndq}
    \eta (\vecQ) &=&  \ds  \frac{\alpha}{2}{p^{2} } + \frac12  {u}^2  + W(c)+ \frac{1}{2\beta} c^2 + \frac{\delta}{2} {v}^2,  \\[2ex]
    q(\vecQ)   &=&  \ds p u + \frac12 u^3 + c \left(W'(c) + \dfrac1\beta c \right) u + \left(W'(c) + \dfrac1\beta c \right) v.%,\\[2.5ex]
%    q_2(\vecQ)   &=&  \ds p u_2 + \left(\dfrac{2 u_2^3}{3} + u_1^2 u_2 \right) + c \left(W'(c) + \dfrac1\beta c \right) u_2 + \left(W'(c) + \dfrac1\beta c \right) v_2.
    \end{array}
    \end{equation}
    is an entropy/entropy-flux pair for \eqref{claw}. \\
    The system of first-order conservation laws \eqref{claw} is hyperbolic in $\mathcal Q$.
%
%
%    
%
 %   
 %   us assume $\eta: \bar {\mathcal{U}} \to \mathbb{R}$ be a convex function, called an entropy for the system in \eqref{conservationlaws} and there exists a function $\zeta_{j}: \bar {\mathcal{U}} \to \mathbb{R}, 1\leq j \leq d$, called the entropy flux. If the system in \eqref{conservationlaws} satisfy the compatibility condition such that 
  %  \begin{equation}
  %  \eta'(\bm{U}^{\eps}_c) \Sigma_{j}^d\bm{F}'_{j}(\bm{U}^{\eps}_c) = \zeta'_{j}(\bm{U}^{\eps}_c), \; \quad 1\leq j \leq d, \; d \in \{2,3\}.
  %  \end{equation}
%Then the system in \eqref{conservationlaws} is a hyperbolic system.\\
\end{theorem}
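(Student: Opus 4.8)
The plan is to lean on the classical entropy–symmetrization argument: a system of conservation laws possessing a strictly convex entropy is automatically symmetrizable and hence hyperbolic. Since the pair $(\eta,q)$ is already handed to us explicitly in \eqref{etaAndq}, the work splits into three self-contained steps: (i) verify the compatibility relation \eqref{compa}; (ii) show that the smallness condition \eqref{betasmall} is precisely what makes $\eta$ strictly convex; (iii) deduce hyperbolicity from strict convexity plus compatibility.

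For step (i) I would compute $\grad\eta(\vecQ) = (\alpha p,\; u,\; W'(c)+\tfrac1\beta c,\; \delta v)^T$ and carry out the row-times-matrix product $(\grad\eta)^T D\vecf$ using the explicit Jacobian \eqref{flux_con_prime}, keeping in mind the defining identity $G'(c) = c(W''(c)+\tfrac1\beta)$. Writing $g(c) := W'(c)+\tfrac1\beta c$, I would separately read off $\partial_p q = u$, $\partial_u q = p+\tfrac32 u^2 + c\,g(c)$, $\partial_v q = g(c)$ from \eqref{etaAndq}; these match the first, second and fourth entries of $(\grad\eta)^T D\vecf$ immediately. The only substantial entry is the third: here $\partial_c q = u\,\partial_c(c\,g(c)) + v\,g'(c) = u\,g(c) + u\,c(W''+\tfrac1\beta) + v(W''+\tfrac1\beta)$, which is to be matched against the Jacobian contribution $u\,G'(c) + u\,g(c) + v(W''+\tfrac1\beta)$; the two agree term by term because $u\,G'(c) = u\,c(W''+\tfrac1\beta)$. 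This establishes \eqref{compa}.

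For step (ii) I note that $\eta$ is separable, so its Hessian is diagonal, $D^2\eta = \mathrm{diag}(\alpha,\,1,\,W''(c)+\tfrac1\beta,\,\delta)$. With $\alpha,\delta>0$ fixed, this is positive definite exactly when $W''(c)+\tfrac1\beta>0$ for every $c\in[-1,1]$, i.e. when $\tfrac1\beta > -\min_{c\in[-1,1]}W''(c)$. I would then check that this is precisely \eqref{betasmall}: if $\min_c W''(c)<0$ the clamp $\min\{W''(c),0\}$ attains that value and the bound becomes $\beta < -(\min_c W'')^{-1}$, whereas if $\min_c W''(c)\ge 0$ the clamp yields $0$, the right-hand side of \eqref{betasmall} is $+\infty$, and no further restriction beyond $\beta>0$ is imposed. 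Hence \eqref{betasmall} is equivalent to strict convexity of $\eta$. For step (iii) I differentiate \eqref{compa} once more in $\vecQ$; using the symmetry of $D^2 q$ and of each Hessian $D^2 f^i$, the contraction $\grad\eta\cdot D^2 f^i$ drops out symmetrically and one is left with the statement that $D^2\eta\,D\vecf$ is symmetric. Since $D^2\eta$ is symmetric positive definite, the conjugate $(D^2\eta)^{1/2}\,D\vecf\,(D^2\eta)^{-1/2} = (D^2\eta)^{-1/2}(D^2\eta\,D\vecf)(D^2\eta)^{-1/2}$ is symmetric, so $D\vecf$ is similar to a real symmetric matrix and therefore has four real eigenvalues with a complete set of real eigenvectors—exactly the spectral conditions defining hyperbolicity of \eqref{claw} on $\mathcal Q$.

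I expect the only genuine obstacle to be the bookkeeping in step (i): the coupling $G'(c)=c(W''+\tfrac1\beta)$ inside the flux and the nonlinear terms $c\,g(c)\,u$ and $g(c)\,v$ inside $q$ must be differentiated and matched carefully so that the $\partial_c q$ entry reproduces the third component of $(\grad\eta)^T D\vecf$. Once this identity is in hand, steps (ii) and (iii) are routine, with the mild subtlety being the correct reading of the clamped minimum in \eqref{betasmall} in the degenerate case $\min_c W''\ge 0$.
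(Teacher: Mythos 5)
Your proposal is correct and takes essentially the same route as the paper: the diagonal Hessian $\mathrm{diag}(\alpha,1,W''(c)+\tfrac1\beta,\delta)$ gives strict convexity of $\eta$ under \eqref{betasmall}, the compatibility relation \eqref{compa} is verified by direct computation using $G'(c)=c\bigl(W''(c)+\tfrac1\beta\bigr)$, and hyperbolicity follows from the existence of the convex entropy pair. The only difference is cosmetic: you spell out the Godunov/Friedrichs--Lax symmetrization argument (symmetry of $D^2\eta\,D\vecf$ and conjugation by $(D^2\eta)^{1/2}$), whereas the paper simply cites the classical references for that implication.
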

%In Theorem \ref{theorem_hyperbolicity} $\eta'$ represents the %derivation of $\eta$ with respect to variables of \eqref{conservationlaws}, which are $\bm{U}^{\eps}_{c}$ . Similarly, $\bm{F}'$ denotes the derivation of $\bm{F}$ with respect to $\bm{U}^{\eps}_{c}$ and $\zeta'$ denotes the derivation of $\zeta$ with respect to $\bm{U}^{\eps}_{c}$. 
%\begin{remark}
%    Because of the elliptic relaxation, if $\beta$ is small enough, $\eta(\bm{U}^{\eps}_c)$ is a convex entropy even though $W(c^{\eps})$ includes a non-convex part. 
%\end{remark}

\begin{proof}   
The Hessian matrix    $\bm{H} \eta $ of $\eta$ from \eqref{etaAndq} is given by 
\[
({\bm{H}} \eta)(\vecQ) = {\rm{diag}} (\alpha , 1, W''(c) + \beta^{-1}, \delta)   \in \setR^4      \qquad          (\vecQ \in {\mathcal Q}).
\]
Thus, the condition \eqref{betasmall}  ensures the (strict) convexity of $\eta$. \\
The flux Jacobian  $D\bm{f}(\vecQ)$ has been given in  \eqref{flux_con_prime}. 
We obtain 
\[
\begin{array}{c}
 (\grad \eta(\vecQ))^T D\vecf(\vecQ)  =  
 (\alpha p, u, W'(c) + \beta^{-1}c, \delta v )  D\vecf(\vecQ) 
  =  \begin{pmatrix} u \\[1.5ex] \ds p + \frac{3}{2} u^2 + c(W'(c)   + \beta^{-1} c) \\[1.5ex] u G'(c)  + (W'(c) + \beta^{-1}c  ) u + v(W''(c) + \beta^{-1}) \\[1.5ex] W'(c) + \beta^{-1}c 
    \end{pmatrix}^T.
\end{array}
\]
With the definition of $G$ we observe that the last vector in the last line is the transpose 
of $\grad q$. We have proven \eqref{compa}. 
The hyperbolicity of \eqref{claw} is now a consequence of the existence of an entropy/entropy-flux pair, see e.g. \cite{Godunov,FriedrichsLax}. 
\end{proof}

%%%%%%%%%%%%%%%%%%%%%%%%%%%%%%%%%%%%%%%%%%%%%%%%%%%%%%%%%%%%%%%%%%%%%%%%
%\subsubsection{The characteristic analysis of the relaxed friction-type approximation} \label{subsec_char.analysis}

Theorem \ref{theorem_hyperbolicity} ensures the hyperbolicity of \eqref{claw} but does not give any insights on the exact form of (real) eigenvalues of the flux Jacobian $D\vecf$.

In order to find  the  eigenvalues of \eqref{rewrittenfirstorderNSEK}, we need the Jacobian of the flux, given by $ 
D\vecf(\vecQ)$ in \eqref{flux_con_prime}.  We proceed to compute 
the characteristic polynomial for $D\vecf(\vecQ)$ denoted 
by $p= p(\lambda;\vecQ)$. We get
%Next we can derive the characteristic polynomial of \eqref{rewrittenfirstorderNSEK}, $p(\lambda)$, with the formula $p(\lambda) = det(D \vecf(\vecQ)- \mathbb{I} \lambda)$, which is written as 
\begin{equation}\label{firstdetermi}
    p(\lambda;\vecQ) = {\rm{det}}(D\vecf \left(\vecQ)- \mathbb{I} \lambda \right) = \begin{vmatrix}
        -\lambda & \ds \frac1\alpha & 0 & 0\\[1.5ex]
        1 & \ds \frac32 u - \lambda& G'(c) & 0 \\[1.5ex]
        0 & c & u - \lambda & 1\\[1.5ex]
        0 & 0 & \ds \frac1\delta \left(W''(c)+\frac1\beta \right) & -\lambda 
    \end{vmatrix}.
\end{equation}
Eliminating  the first column and the first two rows  to develop  the determinant we get %with the cofactors $-\lambda$ and $1$ 
from \eqref{firstdetermi} the relation
\begin{equation}\label{seconddetermi}
   p(\lambda;\vecQ) = -\lambda \begin{vmatrix}
      \ds   \frac32 u -\lambda & G'(c) & 0 \\[1.5ex]
        c & u - \lambda & 1\\[1ex]
        0 & \ds \frac1\delta \left(W''(c)+\frac1\beta \right) & -\lambda 
    \end{vmatrix} - \begin{vmatrix}
      \ds   \frac1\alpha & 0 & 0\\[1.5ex]
         c & u - \lambda & 1\\[1.5ex]
          0 & \ds  \frac1\delta \left(W''(c)+\frac1\beta \right) & -\lambda 
    \end{vmatrix}.
\end{equation}
Using  the co-factors of $\frac32 u- \lambda$ and $c$ in the first determinant  and the co-factor of $\alpha^{-1}$
in the second determinant in \eqref{seconddetermi} we arrive at

\[%begin{equation}\label{thirddetermi}
     p(\lambda;\vecQ) = -\lambda \left(
     \left(\frac32 u- \lambda\right) \begin{vmatrix}
     u - \lambda & 1\\[1.5ex]
   \ds   \frac1\delta \left(W''(c)+\frac1\beta \right) & -\lambda
     \end{vmatrix} - c \begin{vmatrix}
         G'(c) & 0 \\[1.5ex]
    \ds      \frac1\delta \left(W''(c)+\frac1\beta \right) & -\lambda
     \end{vmatrix}
     \right)
     - \dfrac{1}{\alpha}\begin{vmatrix}
         u - \lambda & 1\\[1.5ex]
    \ds      \frac1\delta \left(W''(c)+\frac1\beta \right) & -\lambda
     \end{vmatrix}.
\]%end{equation}
We  calculate the determinant of each  $(2\times2)$-matrix and 
are led to 
\[%begin{equation}\label{lastdetermi}
    p(\lambda;\vecQ) = \left( \lambda^2 - \frac32 u \lambda - \dfrac{1}{\alpha} \right) \left(\lambda^2 - u\lambda - \left(\dfrac{1}{\delta}(W''(c) + \frac{1}{\beta}   \right) \right) - c \; G'(c) \lambda^2.
\]%end{equation}
After some algebraic calculations and substituting the definition of $G'$ from  \eqref{rewrittenfirstorderNSEK}, we finally obtain 
$p(\cdot;\vecQ)$ as 

\begin{equation}\label{charactpolynomial}
 \begin{array}{rcl}
    p(\lambda;\vecQ) &=& \ds \lambda^4 -  \frac52 u \lambda^3  + 
    \left(\frac32u^2  -\left(W''(c) + \frac1{\beta} \right) \left(\frac1{\delta} + c^2\right) - \frac1{\alpha}\right) \lambda^2  \\[2.5ex] 
   && \ds \hspace*{4cm} +
    \left(\dfrac{3u}{2\delta} \left((W''(c)+ \frac{1}{\beta} \right)   + \dfrac{u}{\alpha}\right) \lambda + \dfrac{1}{\alpha \delta}\left(W''(c) + \frac1\beta \right).
\end{array}
\end{equation}
Although  $p(\cdot;\vecQ)$  depends on $c$ and $u$ only (but not on the "linear-flux"-unknowns $p,\, v$), we have not been able to determine all four roots of $p(\cdot,\vecQ)$ for arbitrary $\vecQ \in \mathcal Q$, but only for very specific states.  %However, 
%the situation gets much more accessible for the static choice 
%$\vecQ =(p,0,c,v)^T $ with $p\in \setR, \, c\in [-1,1], \, v\in \setR$. This eliminates all odd powers of $\lambda$ in \eqref{charactpolynomial} and we get
%\begin{equation}\label{charactpolynomialeven}
 %   p(\lambda;\vecQ) = \ds \lambda^4 -
 %   \left( \left(W''(c) + \frac1{\beta} \right) \left(\frac1{\delta} + c^2\right) + \frac1{\alpha}\right) \lambda^2   + \dfrac{1}{\alpha \delta}\left(W''(c) + \frac1\beta \right).
%\end{equation}
%We readily compute the eigenvalues to obtain 
%\[
%\begin{array}{rcl}
%\lambda_{1/4}(\vecQ) &=&   \pm \sqrt{\frac12 \left(\left(W''(c) %+ \frac1\beta \right) \left(\frac1\delta + c^2\right) +\frac1\alpha\right) + \frac12 \sqrt{\left[\left(W''(c) + \frac1\beta \right) \left(\frac1\delta + c^2 \right)  + \frac1\alpha \right]^2  - \dfrac{4}{\alpha \delta}\left(W''(c)+\frac1\beta \right)} }  ,\\[1.5ex] 
%\lambda_{2/3}(\vecQ) &= &   \pm\sqrt{\frac12 \left(\left(W''(c) %+ \frac1\beta \right) \left(\frac1\delta + %c^2\right)+\frac1\alpha \right) - %\frac12\sqrt{\left[\left(W''(c) + \frac1\beta \right) %\left(\frac1\delta + c^2 \right) + \frac1\alpha\right]^2 - %\dfrac{4}{\alpha \delta}\left(W''(c)+\frac1\beta \right)} },
%\end{array}
%\]
Instead  of focusing on these states  we present some numerical 
experiments for  the spectrum of $D\vecf(\vecQ)$.

%%%%%%%%%%%%%%%%%%%%%%%%%%%%%%%%%%%%%%%%%%%%%%%%%%%%%%%%%%%%%%%%%%%%%%%%%%%%%%%%%%%%%%%%%%%%%%%%%%%%%

\begin{Example}[Eigenvalue computations] \label{example}
In view of the absence of an explicit eigenvalue formulae, we alternatively present numerical calculations of  the roots of $p(\cdot; \vecQ)$ for varying values of the parameters $\alpha,\delta,\beta$ and 
  the variables $u$ and $c$. Throughout the example, we fix $\beta = 0.01$ satisfying \eqref{betasmall}.  \\
 To visualize the behavior of the roots 
 we show in  all figures a color plot of the sign 
of the characteristic polynomial over $\lambda$: 
The yellow (magenta) color corresponds to positive (negative) values of $p(\cdot; \vecQ)$. 
\medskip

%
%
%\begin{minipage}[c]{0.5\textwidth}
%\label{tab:example1}
%\centering
%\captionof{table}{The parameters for Example 1}
%\begin{tabular}{|c||c|}
%\hline
%$u$ & $0$\\
%\hline
%$c$ & $1$ \\
%\hline
%$\beta$ & $0.01$ \\
%\hline
%$\delta$ & $0.09$ \\
%\hline
%$\alpha$ & $[0.01,0.001]$ \\
%\hline
%\end{tabular}
%\end{minipage}
%\begin{minipage}[c]{0.5\textwidth}
%\label{tab:example2}
%\centering
%\captionof{table}{The parameters for Example 2}
%\begin{tabular}{|c||c|}
%\hline
%$u$ & $0$\\
%\hline
%$c$ & $1$ \\
%\hline
%$\beta$ & $0.01$ \\
%\hline
%$\delta$ & $[0.1,0.01]$ \\
%\hline
%$\alpha$ & $0.01$ \\
%\hline
%\end{tabular}
%\end{minipage}
%
%%%%%%%%%%%%%%%%%%%%%%%%%%%%%%%%%%%%%%%%%%%%%%%%%%%%%%%%%%%%%%%%%%%%%%%%%%%%%%%%%%%%%%%%%%%%%%%%%%%%%%%%%%%%%%%%%%%%%%%

%In this part we consider $\alpha$ and $\beta$ because the solution of \eqref{firstorderNSEK} converges to the solution of \eqref{firstorderNSEK} as $\alpha, \beta \to 0$. 
%\medskip 

\textbf{Static velocity:}
First, in two numerical experiments we analyze the impact of $\alpha$ and $\delta$ on the roots for  the static  choice $u=0$ and $c=1$.
%Then, in order to  visualize the impact of $u$ and $c$, we change them and obtain the roots of $p(\lambda; \vecQ)$ for new $u$ and $c$ values. \
%
%
%
%
%
%Table~1 and Table~2 show the values of parameters for Example 1 and Example 2, respectively. 
 %
%%%%%%%%%%%%%%%%%%%%%%%%%%%%%%%%%%%%%%%%%%%%%%%%%%%%%%%%%%%%%%%%%%%%%%%%%%%%
\begin{figure}[h!]
    \centering
    \begin{minipage}[b]{0.48\textwidth}
        \includegraphics[width=0.8\textwidth]{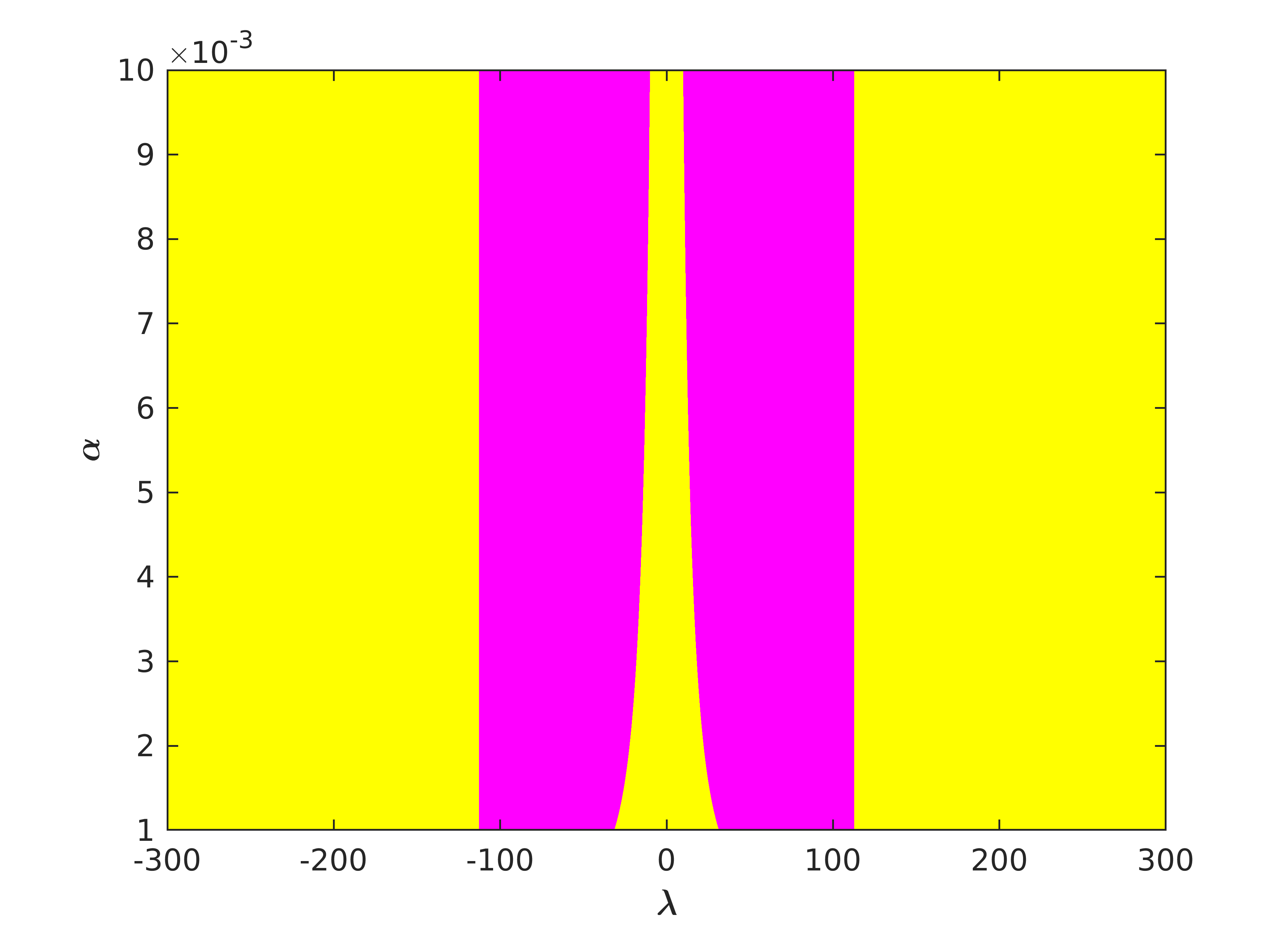}
        \centering
        \caption{Color plot of ${\rm{sgn}}(p(.; \vecQ))$ for fixed $\delta = 0.09 $ and  $\alpha \in [0.01,0.001]$ on the  vertical axis.}
        \label{fig:eigen.alpha_steady}
    \end{minipage} \quad
        \begin{minipage}[b]{0.48\textwidth}
            \includegraphics[width=0.8\textwidth]{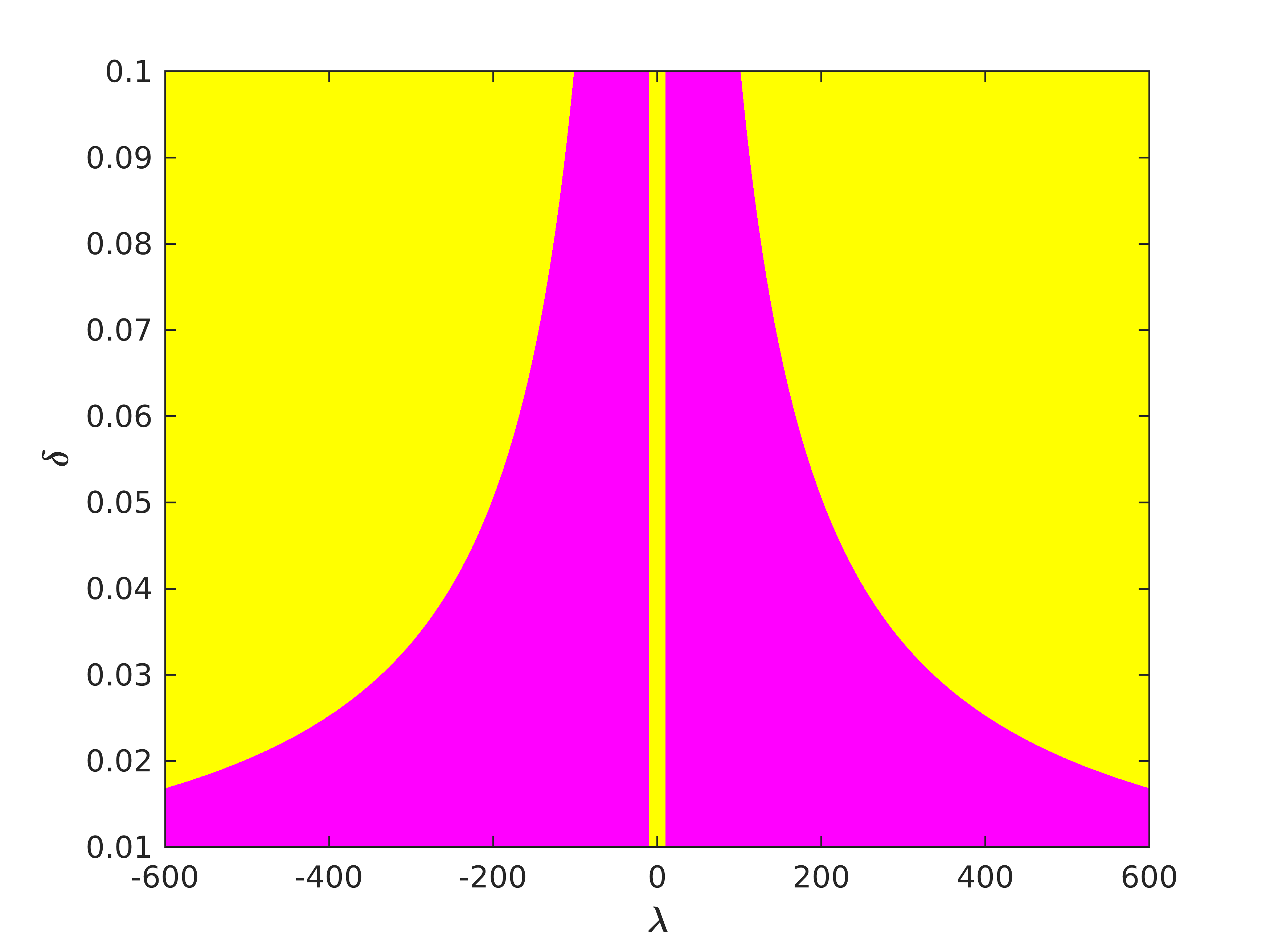}
            \centering
            \caption{Color plot of ${\rm{sgn}}(p(.; \vecQ))$ for fixed $\alpha = 0.01 $ and  $\delta\in [0.01,0.001]$ on the  vertical axis.}
            \label{fig:eigen.delta_steady}
        \end{minipage}
\end{figure}
%---------------------------------------------------------------------------
Figure~\ref{fig:eigen.alpha_steady} illustrates 
for fixed $\delta$, that $p(\cdot; \vecQ)$  has 
four real roots for any choice of $\alpha$. 
Interestingly, the bigger roots in absolute values seem not to depend on $\alpha$ whereas the  
inner pair of roots explodes with $\alpha \to 0$
and might, eventually, cross the outer ones. 
%
%
%for Example 1, in which purple presents negative values and yellow represents positive values. Each line stands for a root of $p(\lambda; \vecQ)$. The colour representation for the negative and positive values is the same for next graphs of the roots of $p(\lambda; \vecQ)$. In Example 1 we have special situation because there is no velocity effect on the roots. We can see four real and distinct roots, which are symmetric to each other. Moreover, the inner roots vary as $\alpha$ varies in $[0.01,0.001]$. However, the outer roots are constant.
Note also that the roots are symmetric with respect to the horizontal  $(\lambda=0)$-axis, see the effect of setting $u=0$ in \eqref{charactpolynomial}. \\ 
In Figure~\ref{fig:eigen.delta_steady} we display the results 
for fixed $\alpha$, varying now $\delta$. Again  $p(\cdot; \vecQ)$  has 
four  (symmetric) real roots for any choice of $\delta$. Now the outer roots depend sensitively on $\delta$, exploding for $\delta \to 0$.\\
The observations on the asymptotic behaviour confirm   just that solutions of the limit problem \eqref{NSCH} 
exhibit the inifinite-speed-of-propagation property. The major challenge for the numerical simulation based on the relaxed friction-type system \eqref{firstorderNSEK} will be therefore the treatment of the fast wave-speeds.
\medskip 

\textbf{Non-zero velocity:}
%
%
%
%
%In Example 2 we also obtain four real and distinct roots. Unlike Figure~\ref{fig:eigen.alpha_steady}, the outer roots change as $\delta$ changes in $[0.1, 0.01]$. On the other hand, the inner roots stay the same. 
%%%%%%%%%%%%%%%%%%%%%%%%%%%%%%%%%%%%%%%%%%%%%%%%%%%%%%%%%%%%%%%
Now we keep all data but  change the velocity,  first to $u=-15$.

%---------------------------------------------------------------------------------------------
\begin{figure}[h!]
    \centering
    \begin{minipage}[b]{0.48\textwidth}
        \includegraphics[width=0.8\textwidth]{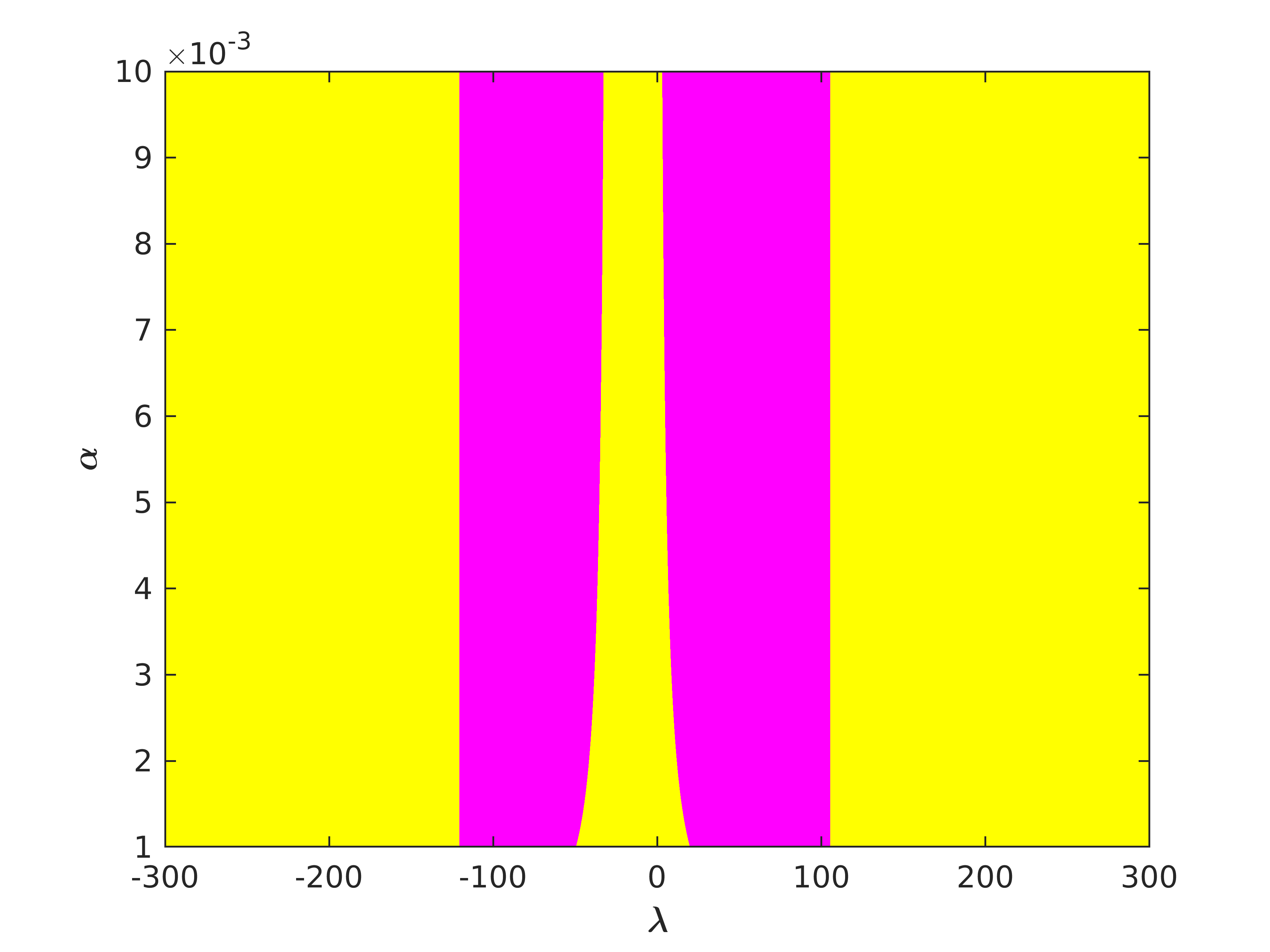}
        \centering
        \caption{Color plot of ${\rm{sgn}}(p(.; \vecQ))$ for fixed $\alpha = 0.01 $ and  $\delta\in [0.01,0.001]$ on the  vertical axis.}
        \label{fig:eigen.alphachanges}
    \end{minipage} \quad
        \begin{minipage}[b]{0.48\textwidth}
            \includegraphics[width=0.8\textwidth]{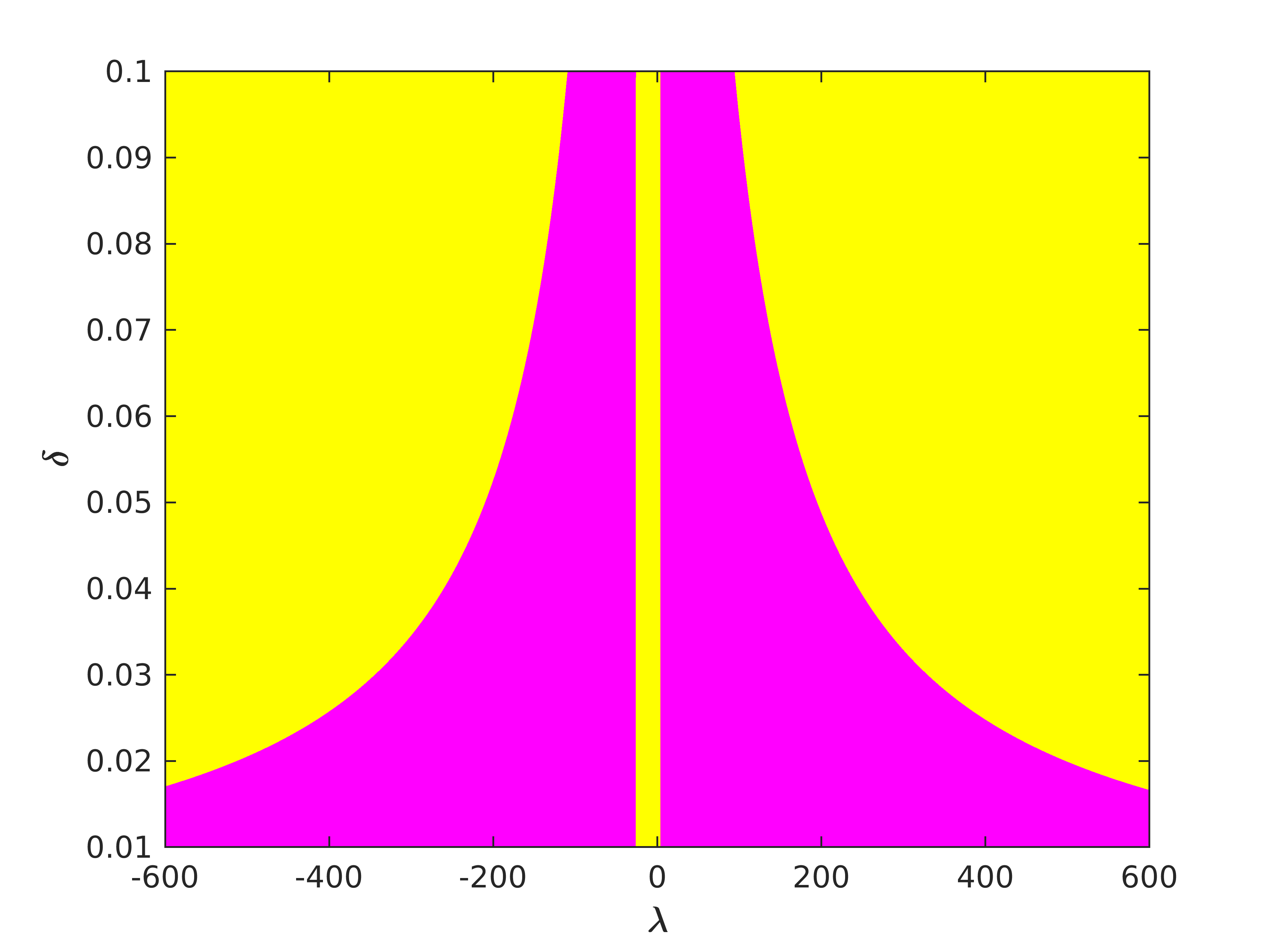}
            \centering
            \caption{Color plot of ${\rm{sgn}}(p(.; \vecQ))$ for fixed $\alpha = 0.01 $ and  $\delta\in [0.01,0.001]$ on the  vertical axis.}
            \label{fig:eigen.deltachanges}
        \end{minipage}
\end{figure}
%%-------------------------------------------------------------------------------------------
Like in the previous setting, we detect four real and distinct roots in Figure~\ref{fig:eigen.alphachanges} for any $\alpha$ having the same asymptotic behaviour. However, the roots are not symmetric anymore, but shifted to the left, see e.g. the small absolute-value pair.
In Figure \ref{fig:eigen.deltachanges} we observe the analogous behaviour when varying $\delta$. \\
To confirm the expected shift effect for positive velocity $u=12$ in positive direction we  display the corresponding results in Figures \ref{fig:eigen.example3}
and \ref{fig:eigen.example4}.

\begin{figure}[h!]
   \centering
   \begin{minipage}[b]{0.48\textwidth}
    \includegraphics[width=0.8\linewidth]{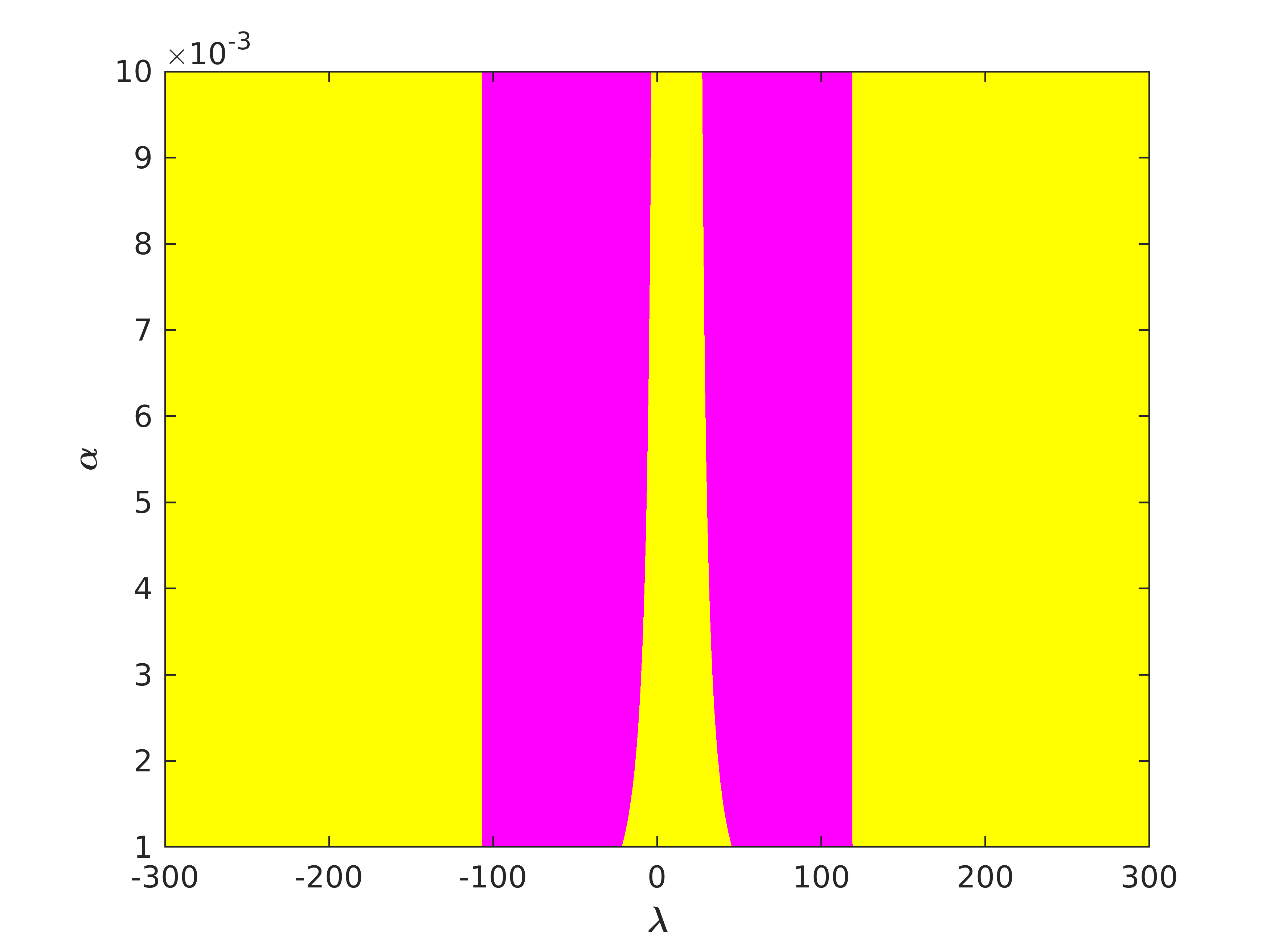}
    \centering
    \caption{Color plot of ${\rm{sgn}}(p(.; \vecQ))$ for fixed $\delta = 0.01 $ and  $\alpha\in [0.01,0.001]$ on the  vertical axis.}
    \label{fig:eigen.example3}
    \end{minipage}
    \begin{minipage}[b]{0.48\textwidth}
        \includegraphics[width=0.8\textwidth]{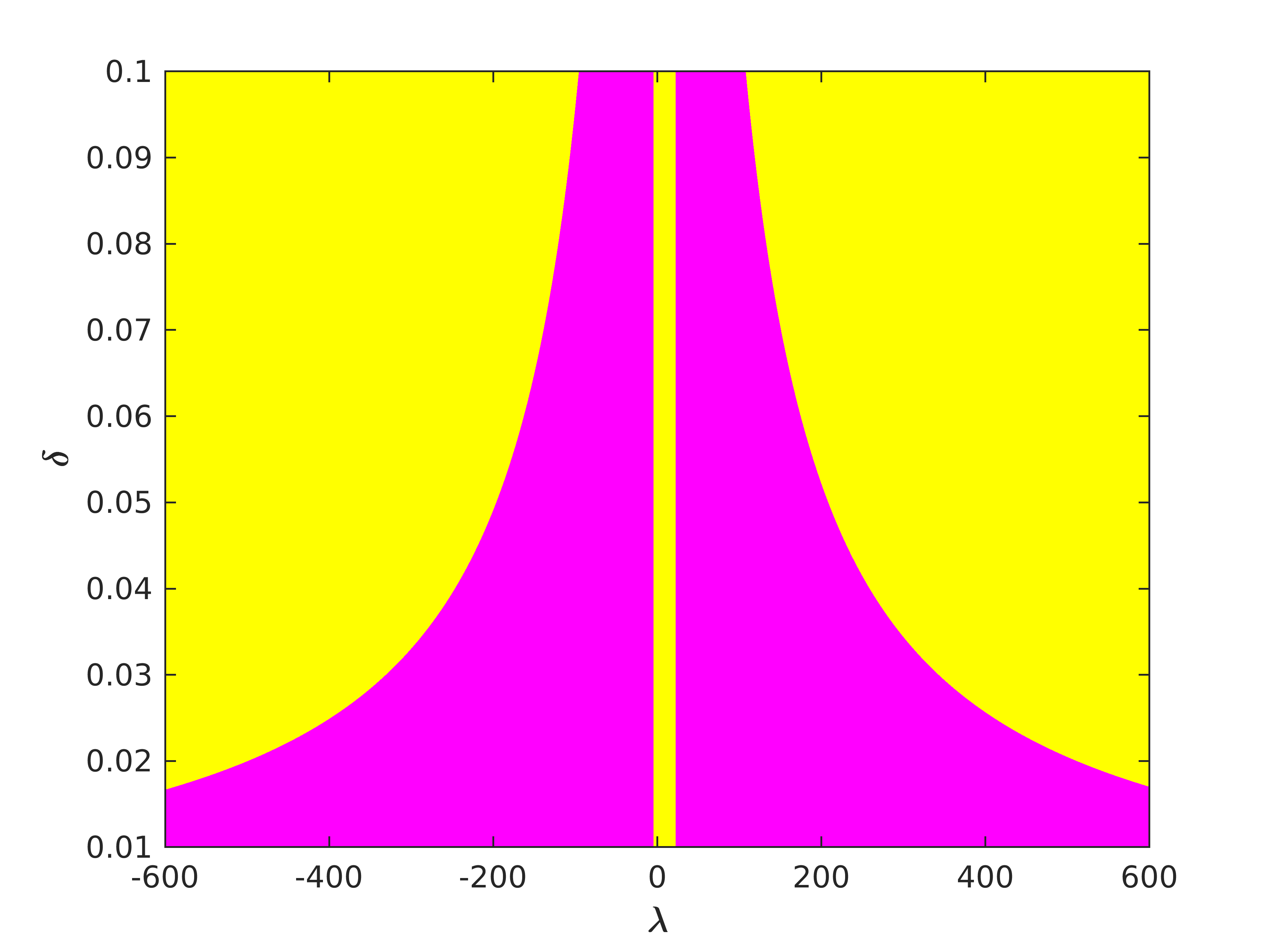}
        \centering
        \caption{Color plot of ${\rm{sgn}}(p(.; \vecQ))$ for fixed $\alpha = 0.01 $ and  $\delta\in [0.01,0.001]$ on the  vertical axis.}
        \label{fig:eigen.example4}
    \end{minipage}
\end{figure}

%%---------------------------------------------------------------------------------------------------------------
%
%Figure~\ref{fig:eigen.example3} shows the roots of $p(\lambda; \vecQ)$ for $u=12$ in Example 5. If we compare it with Figure~\ref{fig:eigen.alphachanges}, we observe the positive inner root becomes negative and the negative one changes to positive.
%
%Das kann ich aus den Bildern nicht rauslesen
%
%
%Similarly, the positive outer root becomes negative and the positive one takes a negative value. Figure~\ref{fig:eigen.example4} indicates that the inner roots switch to the positive side as $\delta$ varies in Example 6. However, we cannot obtain any quantitative difference for the outer roots for $u=12$ value in Figure~\ref{fig:eigen.example4}. \\
The numerical experiments  up to now indicate that the roots  of $p(\cdot; \vecQ)$ are arranged in two pairs: one  pair depends on $\alpha$ but not on $\delta$; the other pair shows reverse behaviour. We believe that this property holds in the entire state space. 
\medskip 

%
%Therefore, the sub-system of \eqref{rewrittenfirstorderNSEK} has four eigenvalues, two of which are $\alpha$-dependent and the others are $\delta$-dependent.
%
%---------------------------------------------------------------------------------------------------------------------------------
%\begin{minipage}[c]{0.5\textwidth}
%\label{tab:example5}
%\centering
%\captionof{table}{\footnotesize The parameters for Example 5}
%\begin{tabular}{|c||c|}
%\hline
%$u$ & $12$\\
%\hline
%$c$ & $-0.5$ \\
%\hline
%$\beta$ & $0.01$ \\
%\hline
%$\delta$ & $0.09$ \\
%\hline
%$\alpha$ & $[0.01,0.001]$ \\
%\hline
%\end{tabular}
%\end{minipage}
%\begin{minipage}[c]{0.5\textwidth}
%\label{tab:example6}
%\centering
%\captionof{table}{\footnotesize The parameters for Example 6}
%\begin{tabular}{|c||c|}
%\hline
%$u$ & $12$\\
%\hline
%$c$ & $-0.5$ \\
%\hline
%$\beta$ & $0.01$ \\
%\hline
%$\delta$ & $[0.1,0.01]$ \\
%\hline
%$\alpha$ & $0.09$ \\
%\hline
%\end{tabular}
%\end{minipage}

%--------------------------------------------------------------------------------------------------------------------------------------

{\textbf{Effect of $c$-variation:}}
Finally we stress that the qualitiative behaviour of the root locations is not effected 
by the choice of $c$ which is not surprising in view of the dominating effect of $\beta^{-1}$.
In Figures \ref{fig:eigen.example5} and  \ref{fig:eigen.example6} we display the results 
of the same parameter choices as before, but with $c=-0.5$.
%By new $c$, we obtain numerical results of Example 7 and Example 8. 
%
\begin{figure}[h!]
   \centering
   \begin{minipage}[b]{0.48\textwidth}
    \includegraphics[width=0.8\linewidth]{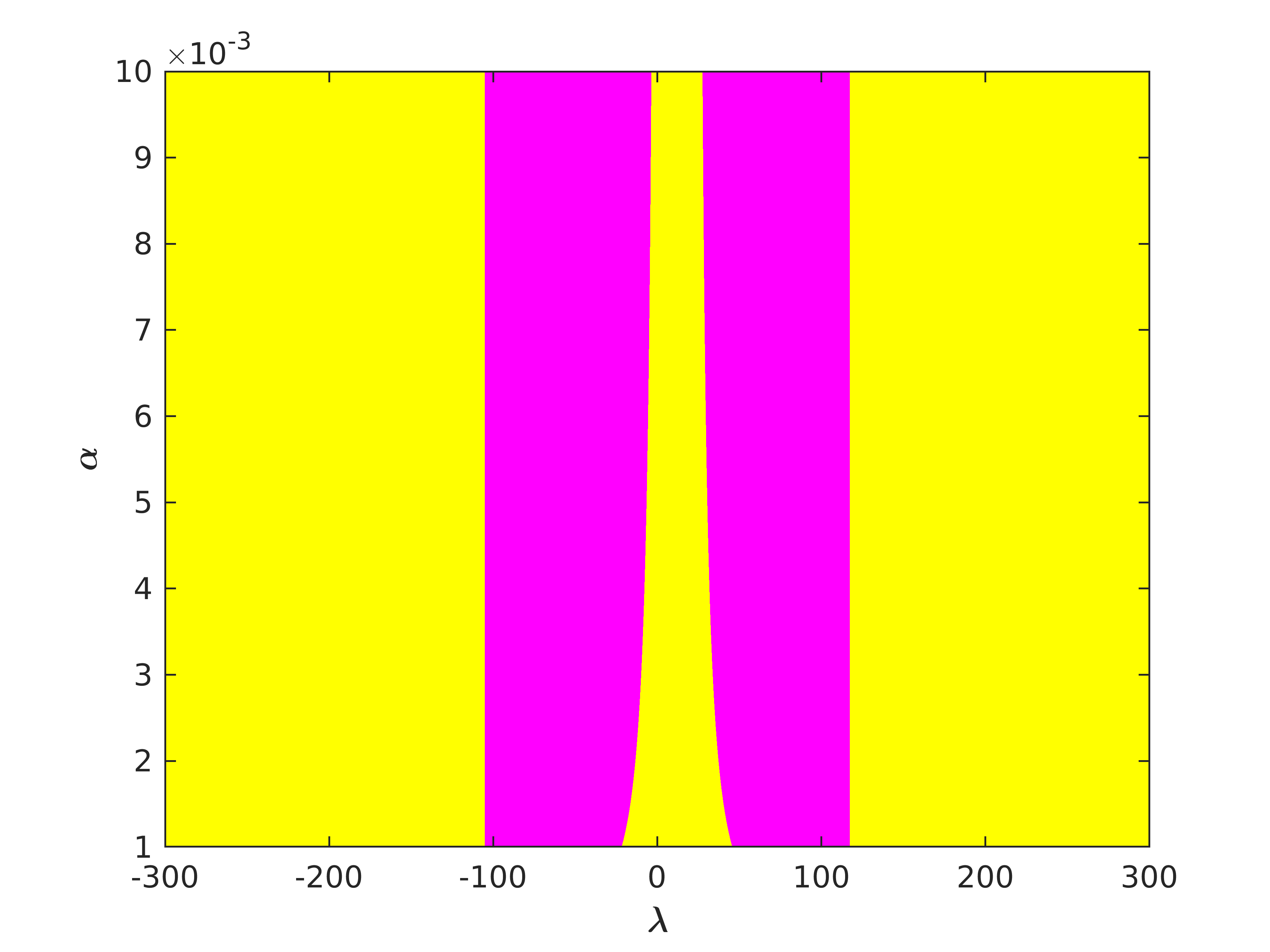}
    \centering
    \caption{Color plot of ${\rm{sgn}}(p(.; \vecQ))$ for fixed $\delta = 0.01 $ and  $\alpha\in [0.01,0.01]$ on the  vertical axis.}
    \label{fig:eigen.example5}
    \end{minipage}
    \begin{minipage}[b]{0.48\textwidth}
        \includegraphics[width=0.8\textwidth]{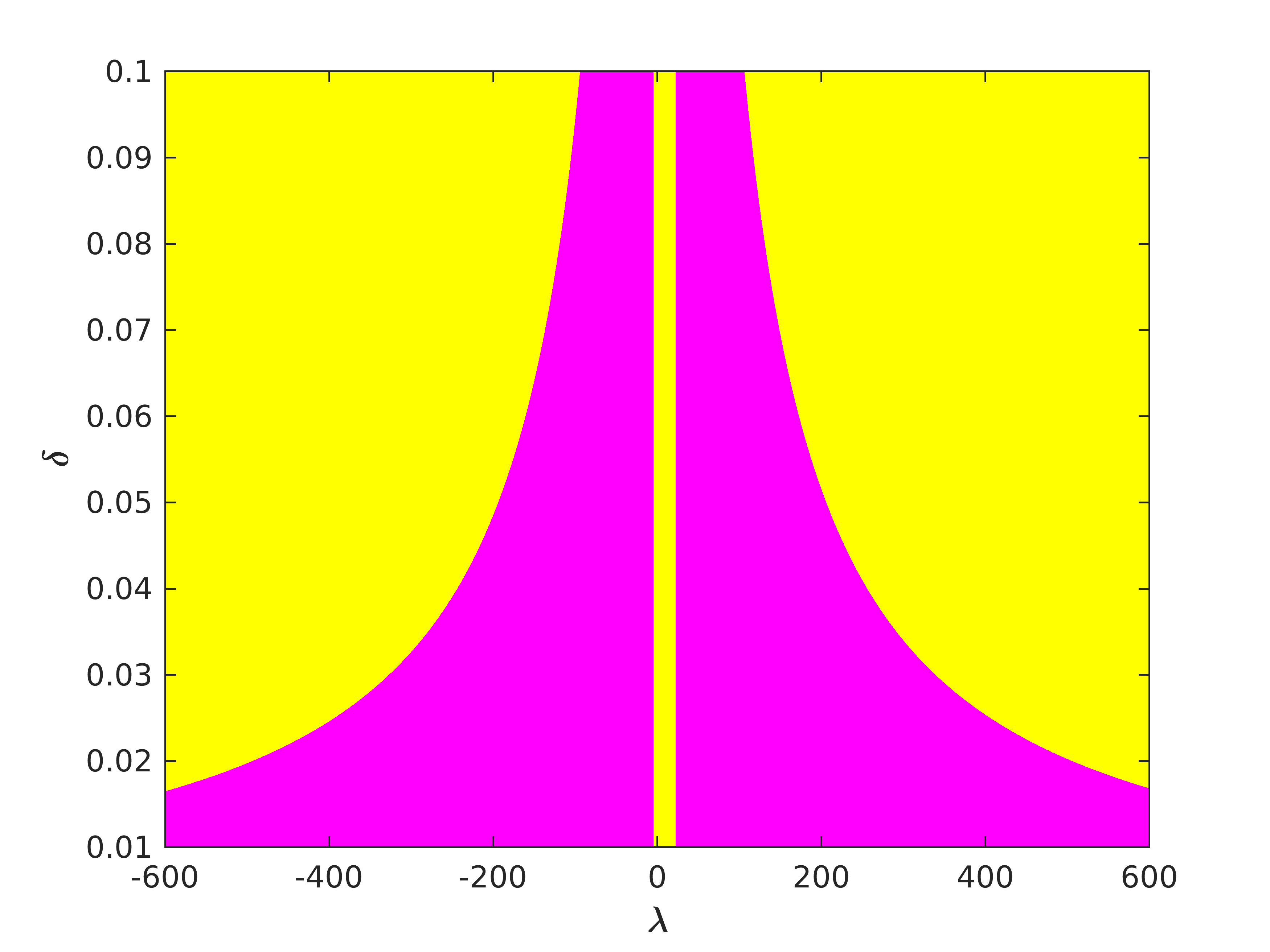}
        \centering
        \caption{Color plot of ${\rm{sgn}}(p(.; \vecQ))$ for fixed $\alpha = 0.01 $ and  $\delta\in [0.01,0.01]$ on the  vertical axis.}
        \label{fig:eigen.example6}
    \end{minipage}
\end{figure}
%
%\medskip
%-------------------------------------------------------------------------------------------------------------------------
%Figure~\ref{fig:eigen.example5} indicates the roots of $p(\lambda; \vecQ)$ for $c=-0.5$ as varying $\alpha$. When it is compared to Figure~\ref{fig:eigen.example3}, there is no visible difference between them. It is also valid for the comparison between Figure~\ref{fig:eigen.example4} and Figure~\ref{fig:eigen.example6}. Therefore, we can come to a conclusion that $u$ has more impact on the behaviour of roots of $p(\lambda; \vecQ)$ than the impact of $c$.
\end{Example}

\section{Numerical Experiments}
\label{sec_numer.experiments}
In this section, we present numerical experiments for the first-order relaxed friction-type approximation  \eqref{firstorderNSEK}  of the NSCH equations in one spatial dimension
(see \eqref{rewrittenfirstorderNSEK}).
These experiments are on the one hand designed to prove numerically that the presented first-order relaxed friction-type model is able to accurately approximate the solution of original Navier--Stokes--Cahn--Hilliard system in the limit of $|\varepsilon| \rightarrow 0$ and on the other hand to illustrate the wave structure of its hyperbolic sub-system.
To this end, besides the full set of equations of the first-order friction-type model in \eqref{rewrittenfirstorderNSEK}, we additionally investigate two reduced versions of it.
For the first sub-system, we only consider the hyperbolic part of \eqref{rewrittenfirstorderNSEK}.
Hence, we omit any contributions due to the source terms or the elliptic constraint for $\omega^\varepsilon$.
The second system is also based on the hyperbolic sub-system however with the additional contribution of the friction-type source term $-\frac{v}{\delta}$ taken into account.
In the following, these three systems are denoted as the \textit{relaxed friction-type model}, the \textit{hyperbolic sub-system without sources} and the \textit{hyperbolic sub-system with friction}, respectively.

The considered numerical experiments are organized as follows, first, we discuss results for the \textit{hyperbolic sub-system with and without sources} with Riemann initial data and different choices of the relaxation parameters $\varepsilon$ in Section \ref{sub_sec:no_source} and \ref{sub_sec:with_source}.
This is followed by different numerical results obtained with the \textit{relaxed friction-type model} for a stationary droplet, a spinodal decomposition and a setup which features the phenomena of Ostwald ripening in Section \ref{sub_sec:NSEK}.
Finally, to illustrate the robustness of the \textit{relaxed friction-type model}, we revisit the setup with the Riemann initial data and perform simulations with the full system.
\subsection{The relaxed friction-type model without source terms}\label{sub_sec:no_source}
The present section is devoted to the numerical investigation of the hyperbolic sub-system of \eqref{rewrittenfirstorderNSEK}.
For this, we considered two different types of Riemann initial data, first, a \textit{compression-like} initial condition, given by
\begin{equation}
    \label{eq:ShockTubeOne}
   (p^{\varepsilon},u^{\varepsilon},c^{\varepsilon},v^{\varepsilon})(x,0) = \begin{cases}
         \vspace{0.25cm}
         (0,+1, 1, 0) : ~ x \leq 0,\\
         (0,-1, 1, 0) : ~ x > 0,
    \end{cases}
\end{equation}
and second, a \textit{expansion-like} initialization, defined as
\begin{equation}
    \label{eq:ShockTubeTwo}
   (p^{\varepsilon},u^{\varepsilon},c^{\varepsilon},v^{\varepsilon}) (x,0) = \begin{cases}
         \vspace{0.25cm}
         (0,-1, 2, 0), \; x \leq 0,\\
         (0,+1, 2, 0), \; x > 0.
    \end{cases}
\end{equation}
The naming of the test cases as \textit{compression} and \textit{expansion-like} is chosen in analogy to the compressible Euler equations and only due to convenience.
To ensure the hyperbolicity of the investigated sub-system, instead of the double-well potential defined in \eqref{doublewell}, we utilize a convex free-energy function, given as $W (c) = {c}^{-1}$.
For both initial conditions, different choices of relaxation parameters are investigated, namely $\alpha \in \lbrace 0.5 , 0.1 , 0.05 \rbrace$ and $\delta = 0.0625$.
The simulations are performed with a classical second-order finite volume MUSCL-Hancock scheme on the space-time domain $\Omega_T =  (-1,1) \times (0,0.15)$.
For the slope limiting the \textit{minmod} limiter is utilized and the numerical fluxes at the cell edges are approximated by the use of a Rusanov flux.
The spatial domain is discretized with $N = 10^4$ cells and the time step size is computed based on the CFL-condition with $CFL = 0.9$.
At the boundaries non-reflecting boundary conditions are employed.

The results of the \textit{compression-like} test case in \eqref{eq:ShockTubeOne}, with different choices of the relaxation parameters are depicted in Figure \ref{fig:ShockTubeOne}.
\begin{figure}
    \centering
    \includegraphics[width=0.9\linewidth]{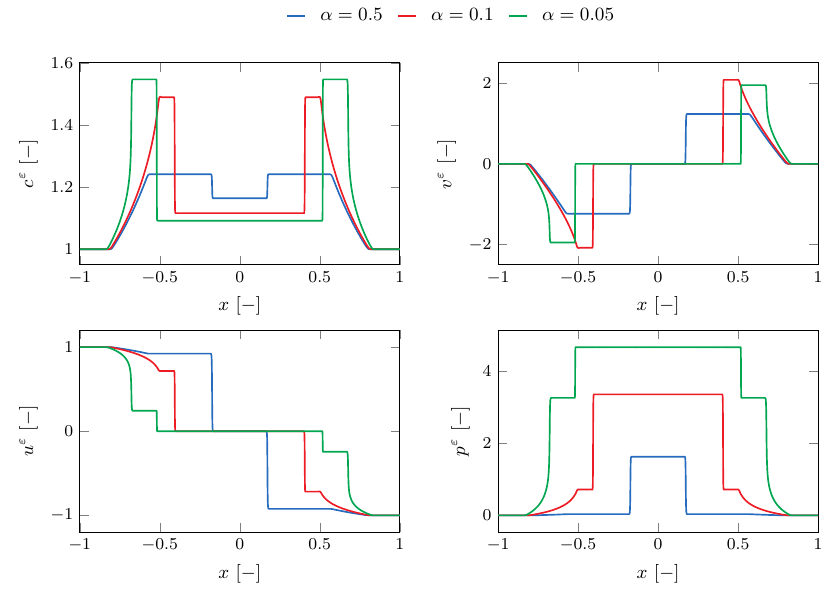}
    \caption{Comparison of the pressure $p$, the velocity $u$, the flux $v$ and the phase-field variable $c$ for the \textit{compression-like} test case and different choices of the relaxation parameters, $\alpha \in \lbrace 0.5 , 0.1 , 0.05 \rbrace$ and $\delta = 0.0625$ at $t = 0.15$.}
    \label{fig:ShockTubeOne}
\end{figure}
As expected, overall four waves can be observed in the simulation results, corresponding to the existence of  four real eigenvalues.
In the case of the initial Riemann data, defined in \eqref{eq:ShockTubeOne}, this results in two inner shock waves and two outer rarefaction waves.
The position of the inner shocks is strongly dependent on the relaxation parameter $\alpha$, while the speed of the outer rarefaction waves is almost unaffected by the choice of $\alpha$.
However, the plateau values of all conserved quantities are heavily affected by the choice of $\alpha$.
A completely different observation can be made for the \textit{compression-like} test case in \eqref{eq:ShockTubeTwo}, depicted in Fig.~\ref{fig:ShockTubeTwo}.
Here, depending on the choice of $\alpha$, two inner rarefaction waves and two outer shocks can be observed or, in the case of $\alpha = 0.05$, four rarefaction waves.
Moreover, for this setup, a strong dependence of all four wave speeds on the relaxation parameter $\alpha$ can be observed.
\begin{figure}
    \centering
    \includegraphics[width=0.9\linewidth]{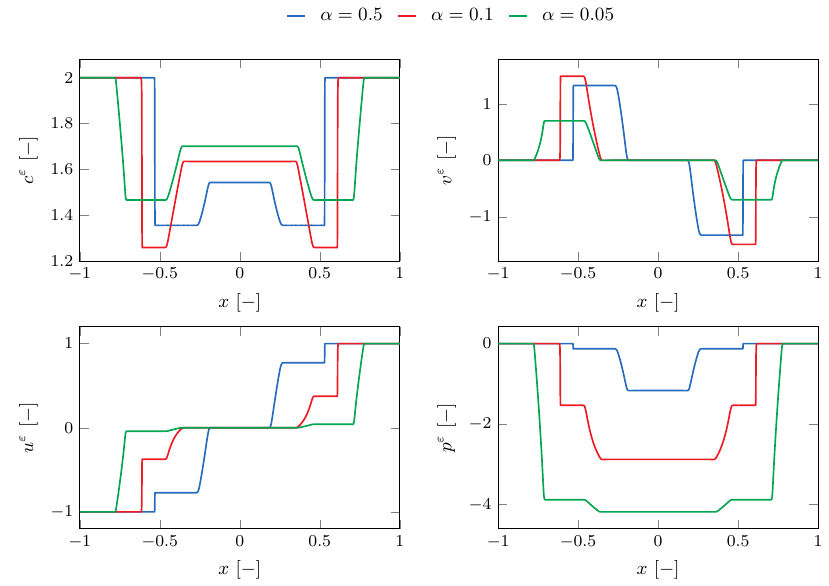}
    \caption{Comparison of the pressure $p$, the velocity $u$, the flux $v$ and the phase-field variable $c$ for the \textit{expansion-like} test case and different choices of the relaxation parameters, $\alpha \in \lbrace 0.5 , 0.1 , 0.05 \rbrace$ and $\delta = 0.0625$ at $t = 0.15$.}
    \label{fig:ShockTubeTwo}
\end{figure}
%
%
%We aim to observe how the solution of the system in \eqref{system_zerosource} converges depending on $\alpha$ and $\delta$. We made several numerical experiments to understand the impact of $\alpha$ and $\delta$ on numerical solutions. The first conclusion from numerical experiments for \eqref{system_zerosource} is that $\alpha$ should be less than $\delta$ for the convergence. In other to understand the optimum ratio between them, different ratios, $\tfrac{\alpha}{\delta} < 1$, were implemented to numerical solver. The optimum ratio was decided the capability of hyperbolicity of the system in \eqref{system_zerosource} as $\alpha$ and $\delta$ get smaller, i.e., $\alpha, \delta \to 0$, we can converge to the solution of the NSCH system. Our numerical experiments show that the optimum ratio, $r_{opt}$, for $\dfrac{\alpha}{\delta}$ is $0.5$. After finding out $r_{opt}$, we implement it for the rest of numerical experiments.
%
%
\subsection{The relaxed friction-type model with friction source term}\label{sub_sec:with_source}
While the previous section was devoted to the conservative hyperbolic sub-system of \eqref{rewrittenfirstorderNSEK}, in the following, the previous investigation is extended by taking into account the additional contribution of the friction-type source term in the balance equation of the artificial velocity.
Similarly to the previous section, the Riemann initial data in \eqref{eq:ShockTubeOne} and a spatial and temporal discretization based on a MUSCL-Hancock scheme are utilized.
However, in contrast to the previous investigations, the relaxation parameter $\alpha$ is fixed at $\alpha = 0.05$ and the relaxation parameter $\delta$ varies within $\delta \in \lbrace 0.0625, 0.01, 10^{-4}, 10^{-6} \rbrace$.
Moreover, the domain size is extended to $\Omega \in [-2.5,2.5]$.

The results for the the different choices of relaxation parameters are depicted in Figure \ref{fig:ShockTubeOneDamping}.
\begin{figure}
    \centering
    \includegraphics[width=0.9\linewidth]{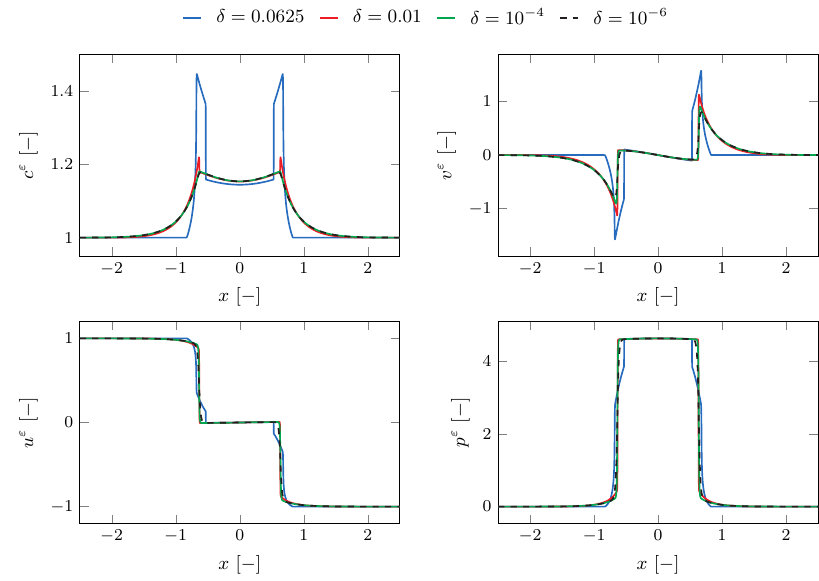}
    \caption{Comparison of the pressure $p$, the velocity $u$, the flux $v$ and the phase-field variable $c$ for \textit{compression-like} test case and different choices of the relaxation parameters, $\alpha = 0.05$ and $\delta \in \lbrace 0.0625, 0.01, 10^{-4}, 10^{-6} \rbrace$ at $t = 0.15$.}
    \label{fig:ShockTubeOneDamping}
\end{figure}
Incorporating the friction source term significantly affects the overall structure of the solution.
Moreover, the solution rapidly converges in all conserved variables to a final profile with decreasing values of $\delta$.
\subsection{The relaxed friction-type approximation of the NSCH system}\label{sub_sec:NSEK}
Finally, we consider numerical experiments with the full \textit{relaxed friction-type model} for the NSCH equations.
The goal of these test cases is on the one hand to illustrate numerically the convergence of the approximate model towards the one-dimensional NSCH equations and on the other hand to discuss the sensitivity of the results on the choice of the approximation parameters $\varepsilon$.
For this, three different types of one-dimensional test cases are considered, namely the relaxation of a droplet to equilibrium, a spinodal decomposition and a one-dimensional Ostwald ripening.
Finally, to highlight the robustness of the \textit{relaxed friction-type model}, we revisit the previously used Riemann data and show results obtained with the approximate formulation of the NSCH equations.

If not stated otherwise, in the following, the \textit{relaxed friction-type model} is discretized with a conservative second-order finite-differences scheme.
The time stepping is fully implicit by utilizing an implicit Runge-Kutta method of the Radau IIA family of fifth-order.
The domain is discretized with $N = 1000$ points and periodic boundary conditions are employed if not stated otherwise.
In addition to the presented friction-type approximation of the NSCH equations, also the original NSCH equations are considered as a reference.
However, since only one-dimensional setups with a uniform velocity field are investigated, the original NSCH equations reduce to the Cahn-Hilliard (CH) equations under these circumstances.
Hence, instead of the NSCH equations, the CH equation are solved as a reference.
For this, we utilized a conservative fourth-order finite-differences discretization as proposed in \cite{dhaouadi2024}.
The same time stepping strategy as for the friction-type approximation model is utilized.
Concerning the physical parameters, the double-well potential in \eqref{doublewell}
is considered and the capillary parameter is fixed to $\gamma = 0.001$ over all simulations.
\subsubsection{Stationary Droplet}
The first test case which we consider is a liquid droplet in its vapor which relaxes to its equilibrium state.
This test case is chosen to investigate the influence of the relaxation parameter $\beta$ on the obtained results.
For this, $\beta$ is varied in between $\beta \in \lbrace 0.1, 0.01 , 0.001 \rbrace$ while the remaining relaxation parameters, $\alpha = 10^{-4}$ and $\delta = 10^{-8}$, remained fixed.
The simulations are performed on the space-time interval $\Omega_T = (-1,1) \times (0,1)$.
The initialization of the droplet in terms of the phase-field variable is given by
\begin{align}
    c (x,0) = c^\varepsilon (x,0) = - \tanh{\left( 10 (|x|-0.5) \right)}.
\end{align}
This corresponds to a droplet with radius $r = 0.5$ whose center is initially located at $x_{ini}=0$.
The pre-factor of $10$ in the $\tanh$-function is used to diffuse the phase interface of the droplet in comparison to its equilibrium profile for the given free energy potential, in \eqref{doublewell}, and the chosen capillary parameter $\gamma$.
Moreover, for the the \textit{relaxed friction-type model}, the artificial pressure and the velocity are both initialized as $p^\varepsilon(x,0) = u^\varepsilon(x,0) =  0$.
Finally, the flux variable at $t = 0$ is defined as $v^\varepsilon (x,0) = \left( W'(c) + \frac1\beta \left(c^\varepsilon - \omega^\varepsilon \right)\right)_x$, where for the sake of clarity, the dependence of $c^\varepsilon$ and $\omega^\varepsilon$ on space and time is omitted.
Since for the evaluation of $v(x,0)$ the additional order parameter $\omega^\varepsilon$ is required,
the elliptic constraint in \eqref{rewrittenfirstorderNSEK} has to be solved for $\omega^\varepsilon$ in advance.
For this, a second-order central finite-differences discretization is utilized.
Due to the linear nature of the elliptic constraint, the corresponding coefficient matrix can be inverted once at the beginning of the simulation and reused afterwards during each time step.

The results of the equilibrated droplet at $t = 1$ are depicted in Figure \ref{fig:StationaryDroplet}.
\begin{figure}
    \centering
    \includegraphics[width=0.9\linewidth]{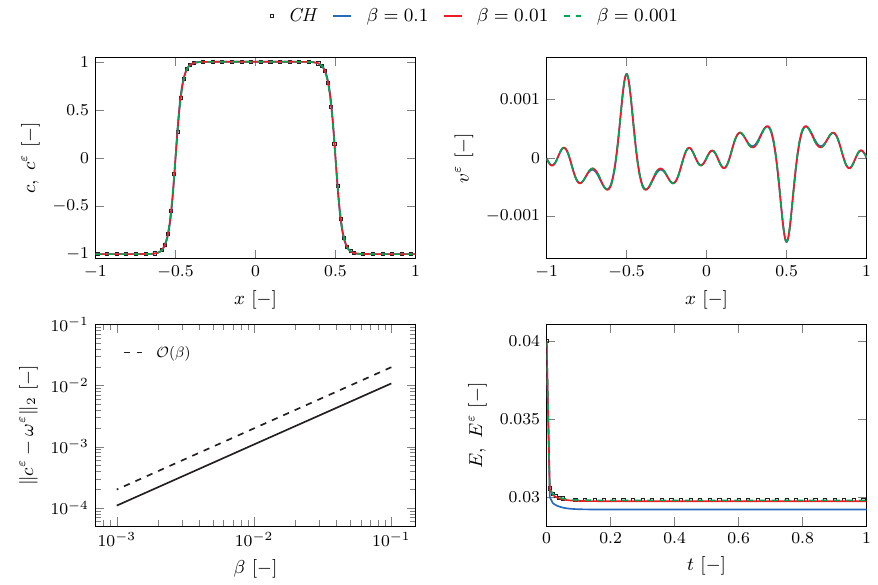}
    \caption{Comparison of the phase-field variable $c^\varepsilon$, the artificial velocity $v^\varepsilon$, the integral $L_2$ difference between the phase-field variable and the order parameter and the evolution of the total energy per unit volume $E^\varepsilon$ over time, for $\alpha = 10^{-4}$ and $ \delta = 10^{-8}$, $\gamma = 0.001$ and $\beta \in \lbrace 0.1 , 0.01 , 0.001 \rbrace$.}
    \label{fig:StationaryDroplet}
\end{figure}
Based on the visualization of the phase-field variable $c^\varepsilon$ no significant dependence of the simulation results on the relaxation parameter $\beta$ can be observed.
Moreover, all results obtained with the \textit{relaxed friction-type model} coincide well with the reference solution of the original CH equations indicated by the square symbols.
The near zero flux variable $v^\varepsilon$, which is according to \eqref{NSEKapproximation} an approximation of the gradient of the chemical potential, indicates that the droplet reached its equilibrium state.
This is also supported by the evolution of the total energy per unit volume $E^\varepsilon$, depicted in the bottom right of Figure~\ref{fig:StationaryDroplet}.
Furthermore, a monotonic decrease in the total energy can be observed.
This underlines the thermodynamical consistency of the model.
In this plot, a slight difference in the evolution of the energy with different choices of the relaxation parameter $\beta$ can be reported.
Moreover, with decreasing values of $\beta$, convergence of the solution obtained with the approximate formulation to the CH equations is reached.
Finally, in the bottom left, the expected first-order convergence of the order-parameter $\omega$ to the phase-field variable $c^\varepsilon$ with decreasing values of $\beta$ is shown.
\subsubsection{Spinodal Decomposition}
As a second test case, we consider  a one-dimensional spinodal decomposition triggered by an initially disturbance in the phase-field variable.
This setup is adopted from \cite{dhaouadi2024} and given as
\begin{align}
    c (x,0) = c^\varepsilon (x,0) =
    \begin{cases}
    +0.01\left(\sin{\left( 10 \pi (1+x) \right)} - \sin{\left( 10 \pi (1+x)^2 \right)}\right) :~x \leq 0, \\
    -0.01\left(\sin{\left( 10 \pi (1-x) \right)} - \sin{\left( 10 \pi (1-x)^2 \right)}\right) :~x > 0.
    \end{cases}
\end{align}
The remaining solution variables are initialized similar to the stationary droplet test case.
Since this numerical experiment features a strong variation of the phase-field variable over time, we use it to investigate the dependence of the simulation results on the relaxation parameter $\delta$.
For this, different choices are investigated, namely $\delta \in \lbrace 10^{-2} , 10^{-4} , 10^{-6} \rbrace$.
To avoid any disturbance due to the remaining relaxation parameters, those are kept fixed with $\alpha = \beta = 10^{-4}$.
The simulations are performed on the space-time domain $\Omega_T = (-1,1) \times (0,4)$.

The results obtained for the phase-field variable $c^\varepsilon$ at the time instances $t \in \lbrace 0 , 0.02 , 0.1 , 0.95 , 0.96 , 4 \rbrace$ are depicted in Figure \ref{fig:SpinodalDecompositionC}.
\begin{figure}
    \centering
    \includegraphics[width=0.9\linewidth]{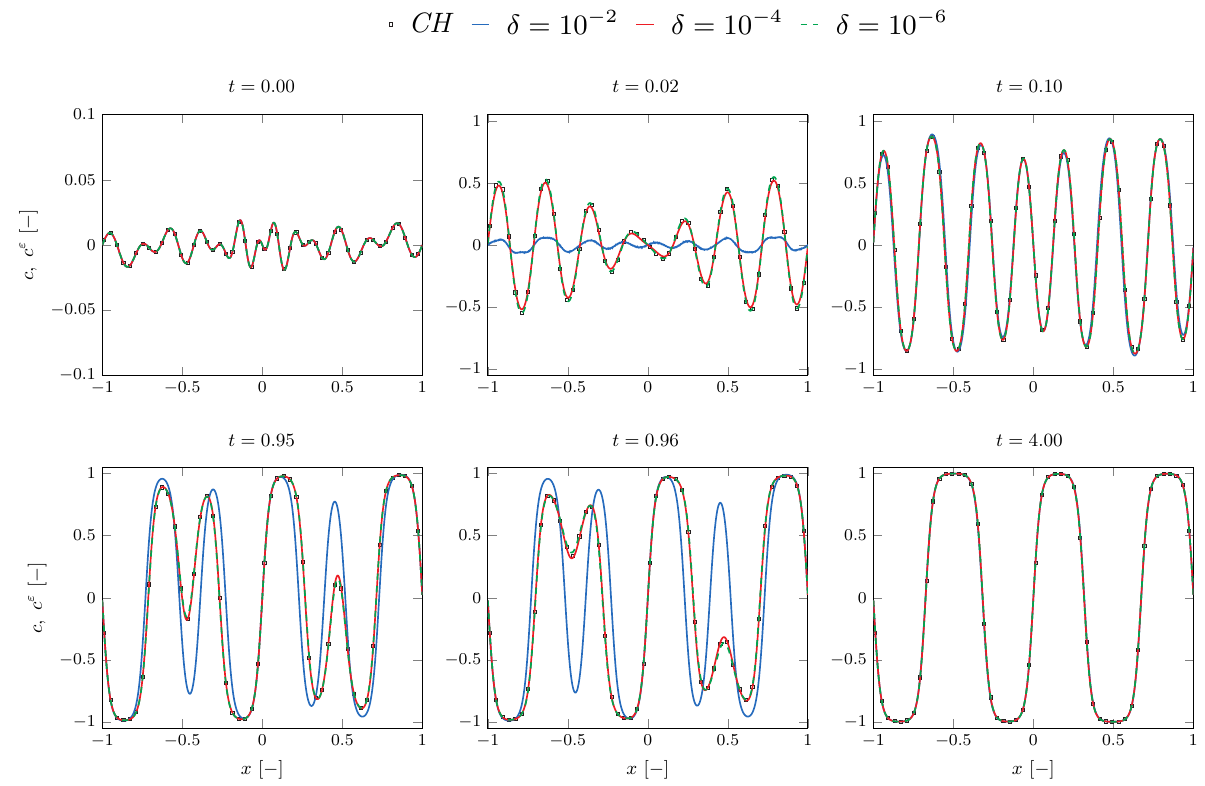}
    \caption{Comparison of the phase-field variable $c^\varepsilon$ at different time instances for $\alpha = \beta = 10^{-4}$, $\gamma = 0.001$ and $\delta \in \lbrace 10^{-2} , 10^{-4} , 10^{-6} \rbrace$.}
    \label{fig:SpinodalDecompositionC}
\end{figure}
A fast convergence of the \textit{relaxed friction-type model} to the original model can be observed for decreasing values of $\delta$.
However, distinct differences in the phase-field variable $c^\varepsilon$ can be observed at $t = 0.95$ and $t = 0.96$ for the choice of $\delta = 10^{-2}$ or $\delta = 10^{-4}$ and lower.
This is due to the fast merging of four droplets into two droplets.
In the end, overall three droplets remain independent of the choice of the relaxation parameter $\delta$.
\subsubsection{Ostwald Ripening}
As a final validation case, we consider the phenomenon of the so-called Ostwald ripening, where two distinct bubbles, each in non-equilibrium, are initialized.
Due to the non-local character of the underlying CH equation, the smaller of the two bubbles will eventually vanish and the larger one will grow until an equilibrium state is reached although they are not in contact.
We use this numerical experiment to investigate the impact of the relaxation parameter $\alpha$ on the simulation results.
Similar to the previous investigations we fix  the other relaxations parameters to $\beta = 10^{-4}$ and $\delta = 10^{-8}$ and varied $\alpha \in \lbrace 0.1 , 0.01 , 0.001 \rbrace$.
The setup is adopted from \cite{hitz2020parabolic,KMR23,dhaouadi2024}, where the initial distribution of the phase-field variable is given as
\begin{align}
    c (x,0) = c^\varepsilon (x,0) = -1 +\sum_{i=1}^2 \tanh{\left( \frac{| x - x_i |-r_i}{\sqrt{2 \gamma}} \right)},
\end{align}
and where the initial droplet positions and radii are given as $x_1 = 0.3$, $x_2 = 0.75$ and $r_1 = 0.12$, $r_2 = 0.06$, respectively.
The remaining solution variables of the \textit{relaxed friction-type model} are initialized similar to the previous investigations.
All simulations are performed on the space-time interval $\Omega_T = (0,1) \times (0,0.3)$.
The domain is discretized with $500$ grid points.

The simulation results for the phase-field variable $c^\varepsilon$ at the time instances $t \in \lbrace 0.1 , 0.2 ,0.3 \rbrace$ with the different choices for the relaxation parameter $\alpha$ are depicted in Figure \ref{fig:OstwaldRipeningC}.
\begin{figure}
    \centering
    \includegraphics[width=0.9\linewidth]{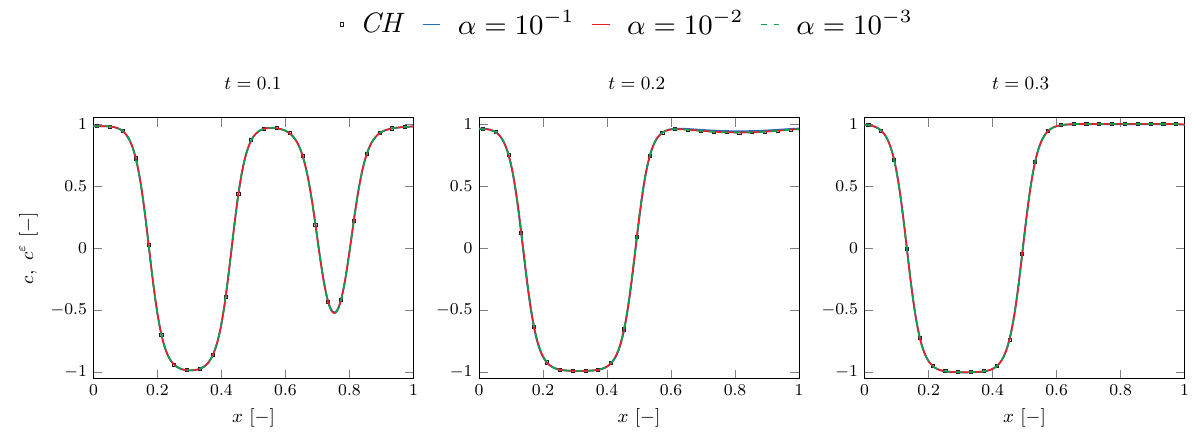}
    \caption{Comparison of the phase-field variable $c^\varepsilon$ at different time instances for $\alpha \in \lbrace 10^{-1}, 10^{-2}, 10^{-3} \rbrace$, $\beta  = 10^{-4}$, $\gamma = 0.001$ and $\delta = 10^{-8}$.}
    \label{fig:OstwaldRipeningC}
\end{figure}
Good agreement with the reference data of the CH equations is obtained for all choices of the relaxation parameter and time instances.
A slight difference of the results obtained with $\alpha = 10^{-1}$ to the remaining simulations at time $t = 0.2$ can be observed.
The velocity and the pressure at time $t = 0.2$ as well as the evolution of the energy over time are depicted in Figure \ref{fig:OstwaldRipeningE}.
Here, a consistent convergence of the velocity to a constant value of zero with decreasing values of $\alpha$ can be reported.
A similar behavior can be observed in the pressure which counteracts any disturbances in the velocity field.
\begin{figure}
    \centering
    \includegraphics[width=0.9\linewidth]{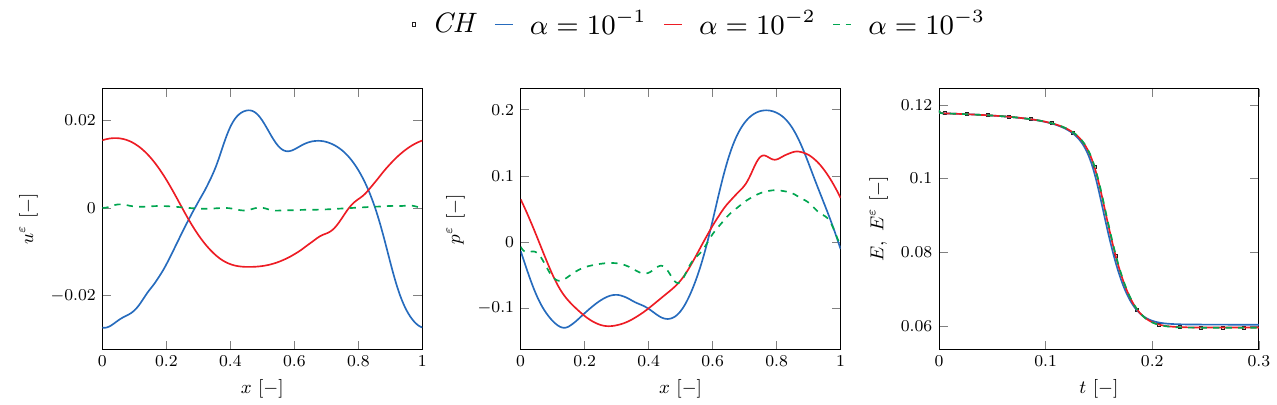}
    \caption{Comparison of the velocity $u^\varepsilon$, the artificial velocity $v^\varepsilon$ at $t = 0.2$ and the evolution of the total energy per unit volume over time for $\alpha \in \lbrace 10^{-1}, 10^{-2}, 10^{-3} \rbrace$, $\beta = 10^{-4}$, $\gamma = 0.001$ and $\delta = 10^{-8}$.}
    \label{fig:OstwaldRipeningE}
\end{figure}
Finally, similar to the previous investigations, a monotonic decrease of the total energy per unit volume can be observed, which again underlines thermodynamical consistency.

To exploit the fact, that the considered \textit{relaxed friction-type model} is also able to consider non-zero velocity distributions, we modify the previous Ostwald ripening test case in the sense that we superimpose a constant velocity of $u^\varepsilon = (0.3)^{-1}$ which causes an additional advection of the two bubbles over time.
Due to the specific choice of the velocity, the final profile of the phase-field variable at $t = 0.3$ should coincide with the one of the zero velocity setup.
The simulation results for the phase-field variable $c^\varepsilon$ at the time instances $t \in \lbrace 0.1 , 0.2 ,0.3 \rbrace$ with the different choices for the relaxation parameter $\alpha$ are depicted in Figure \ref{fig:OstwaldRipeningM}.
\begin{figure}
    \centering
    \includegraphics[width=0.9\linewidth]{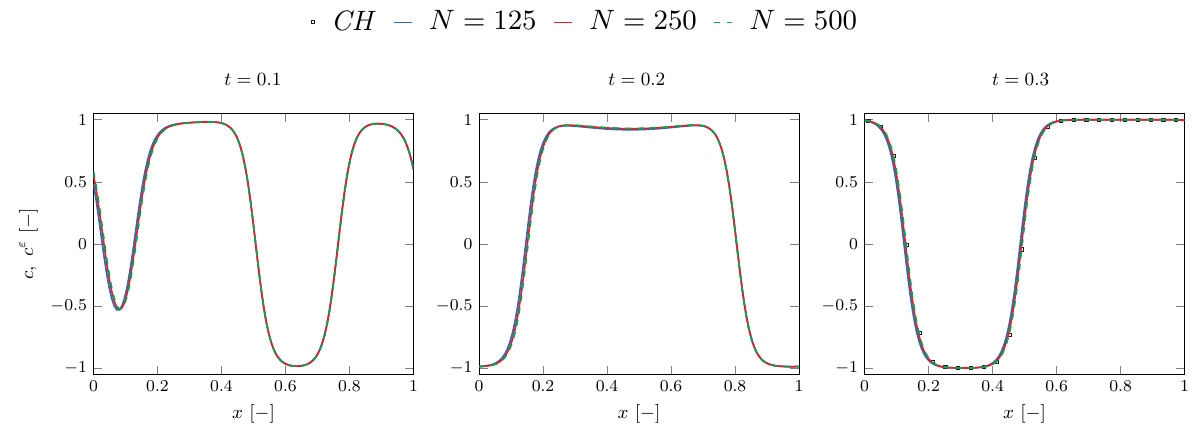}
    \caption{Evolution of the phase field $c^\varepsilon$ over time for $\alpha \in \lbrace 10^{-1}, 10^{-2}, 10^{-3} \rbrace$, $\beta = 10^{-4}$, $\gamma = 0.001$ and $\delta = 10^{-8}$ 
    and the modified Ostwald ripening setup.}
    \label{fig:OstwaldRipeningM}
\end{figure}
The advection of the two and finally single bubble is clearly visible.
At the final time $t = 0.3$, the advected bubble coincides well with the overall position of the non-moving simulation.
However, a minor mismatch in the position of the bubble is present.
This can be attributed to the inherent dispersion error of any numerical scheme, especially low-order schemes.
This observation is supported by additional simulations performed with even coarser grid resolutions.
\subsubsection{A Two-phase Shock Tube Problem}
Finally, to demonstrate the robustness of the \textit{relaxed friction-type model} and the discretization, we revisit the \textit{compression-} and \textit{expansion-like} test cases in \eqref{eq:ShockTubeOne} and \eqref{eq:ShockTubeTwo}.
For this, we use a modified double-well potential, given as
\begin{align}
    W ( c ) = \left ( c - 1 \right)^2 \left ( c - 2 \right)^2,
\end{align}
which guarantees stable equilibrium states at $c^\varepsilon = 1$ and $c^\varepsilon = 2$.
We again used a MUSCL-Hancock scheme for the discretization, however, now for the complete \textit{relaxed friciton-type model}.
The domain is discretized with $2000$ cells and for the time step restriction $CFL = 0.9$ is chosen.
The relaxation parameters and the capillary coefficient are defined as $\alpha = 0.5$, $\beta = 10$, $\gamma = 10^{-4}$ and $\delta = 0.0625$, respectively.
All simulations are performed on the space-time domain $\Omega_T = (-1,1) \times (0 , 0.15)$.
We want to stress that this initialization is not physical and only surfs the purpose to highlight the robustness of the model and the discretization.
Moreover, we would like to highlight that simulations with coarser mesh resolutions would also allow for stable simulations.
However, this would come with the price of a higher numerical diffusion and hence, more diffused shock and rarefaction wave profiles.

The results of both test cases are depicted in Figures \ref{fig:CompressionNSCH} and \ref{fig:ExpansionNSCH}, respectively.
In the case of the \textit{compression-like} test case, the nucleation of a droplet in the center of the domain, clearly visible in the phase-field variable, can be observed.
A different result is obtained with the \textit{expansion-like} setup, where the nucleation of a bubble is triggered.
For the velocity and the pressure of both test cases, a qualitatively similar however, mirrored behavior can be reported.
\begin{figure}
    \centering
    \includegraphics[width=0.9\linewidth]{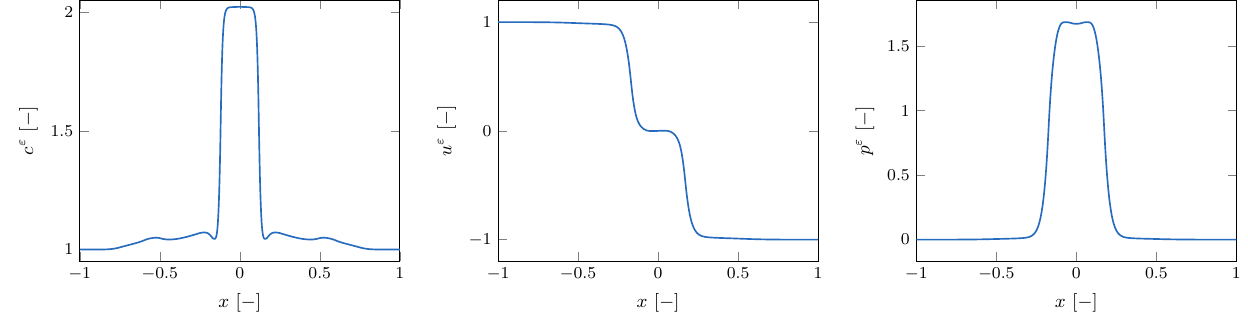}
    \caption{Evolution of the phase field $c^\varepsilon$, the velocity $u^\varepsilon$ and the pressure $p^\varepsilon$ at $t = 0.15$ for $\alpha = 0.5$, $\beta = 0.1$, $\gamma = 10^{-4}$ and $\delta = 0.0625$ for the \textit{compression-like} setup.}
    \label{fig:CompressionNSCH}
\end{figure}

\begin{figure}
    \centering
    \includegraphics[width=0.9\linewidth]{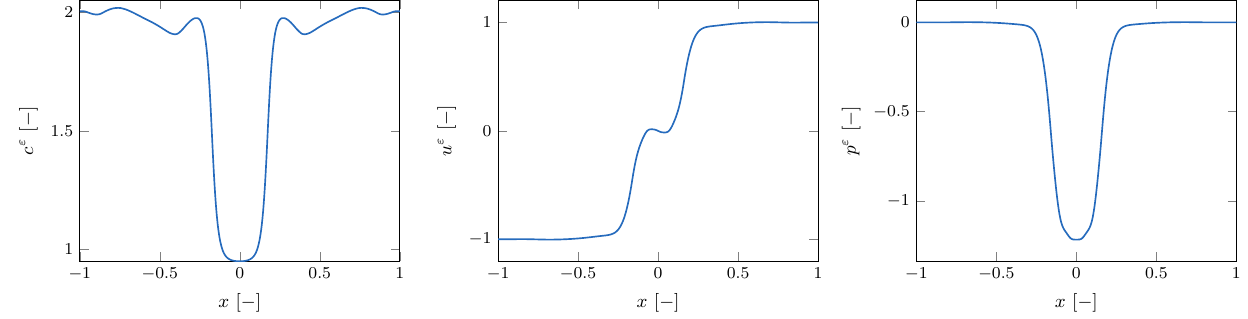}
    \caption{Evolution of the phase field $c^\varepsilon$, the velocity $u^\varepsilon$ and the pressure $p^\varepsilon$ at $t = 0.15$ for $\alpha = 0.5$, $\beta = 0.1$, $\gamma = 10^{-4}$ and $\delta = 0.0625$ for the \textit{expansion-like} setup.}
    \label{fig:ExpansionNSCH}
\end{figure}

%
%---------------------------------------------------------------------------------------------
\section{Conclusions and Outlook}
In this paper we investigated the diffuse-interface approach for incompressible two-phase flows based on the widely used  NSCH system
\eqref{NSCH}.  We    provided a novel approximative system  that  basically 
consists of a  first-order  sub-system, see \eqref{firstorderNSEK}. As our main results, we 
proved that the entire system is thermodynamically consistent and that the first-order sub-system  is --at least in one space dimension-- hyperbolic in the 
relevant state space.
For  exemplary cases we  could showcase a numerical
characteristic analysis. 
%
%We obtain such system approximating the Cahn-Hilliard equation with a friction-type system with the inspiration of relation between the Cahn-Hilliard and Euler-Korteweg with friction. To obtain a first-order system, an elliptic relaxation approach is also applied for the friction-type approximation. Moreover, we use the classical artificial-compressibility approximation of the incompressible Navier-Stokes equations. After obtaining the relaxed friction-type approximation of the NSCH in \eqref{firstorderNSEK}, we analyze it for hyperbolicity. We have four formulae for eigenvalues for a special state, $u=0$., in one dimension and we can also visualize the four eigenvalues when $u \neq 0$ in Section \ref{subsec_char.analysis}. Moreover, to obtain the hyperbolicity for all state spaces, we derive entropy-entropy flux pair. We can implement this method because the elliptic relaxation approximation provides entropy with small enough $\beta$. We show that the system in \eqref{firstorderNSEK} satisfies the Theorem \ref{entropy-pair-theorem}, so it is hyperbolic for all state spaces. 
Finally, we presented   numerical results for the spatially one-dimensional  version \eqref{rewrittenfirstorderNSEK}. \\ 
This note contains partial results of  a  forthcoming paper which addresses more general NSCH models and the numerical solution of the relaxed friction-type system \eqref{firstorderNSEK} in multiple space dimensions \cite{KKR}. The major challenge in this regard is the development of an asymptotic-preserving scheme that employs an implicit-explicit time discretization to overcome the inherent stiffness of the three-parameter problem \eqref{firstorderNSEK}. The question of the convergence of solutions  $\bm{U}^\eps$ of the initial boundary value problem for \eqref{firstorderNSEK} to solutions $\bm U$ of the corresponding solution of the  initial boundary value problem for the limiting NSCH  model \eqref{NSCH} remains open. Possibly it can be done using the ideas for a two-parameter problem as in \cite{huang2024}. We only considered a simplified version of the Navier-Stokes-Cahn-Hilliard class. 
It would be interesting to explore whether $c$-dependent mobilities can be introduced, whether 
fluids with different densities and viscosities could be handled \cite{AbelsGG}, or even systems involving more than two phases \cite{Boyer, RovWo}. 

%
%----------------------------------------------------------------------------------------
%\section{Appendix}
%\label{sec_Appendix}
%\input{macro-latex-proc/Appendix}
%%%%%%%%%%%%%%%%%%%%%%%%%%%%%%%%%%%%%%%%%%%%%%%
\bigskip

\textbf{Acknowledgement.}
This work was  supported by the German Research
Foundation (DFG), within the   International Research Training Group  GRK 2160 Droplet Interaction Technologies
and  the Collaborative Research Center on Interface-Driven Multi-Field Processes in Porous Media (SFB 1313, Project Number
327154368). Funding by the Deutsche Forschungsgemeinschaft (DFG, German Research Foundation) - SPP 2410 Hyperbolic Balance Laws in Fluid Mechanics: Complexity, Scales, Randomness (CoScaRa) is also  gratefully acknowledged. 

%\subsection{...}
%\subsection{...}
%...
%\subsection{...}
%%-----------------------------
%%      your bibliography
%%-----------------------------
%\bibliographystyle{alphadin} % for apalike style
\bibliographystyle{abbrv} % for numeric
\bibliography{literature}{}

\end{document}